\documentclass[11pt]{article} 
\usepackage[utf8]{inputenc}
\usepackage[babel]{microtype}
\usepackage[english]{babel}

\usepackage{amsmath,amssymb,amsthm, mathrsfs, mathtools,bbm,bm}
\usepackage{float}
\usepackage[dvipsnames]{xcolor}
\usepackage{multirow}
\usepackage{enumerate}
\usepackage{enumitem}
\setlist[enumerate,1]{label={(\roman*)}}
\setlist[enumerate,2]{label={(\alph*)}}
\setlist[enumerate,3]{label={(\arabic*)}}
\setlist[itemize]{nolistsep,noitemsep, topsep=0pt}
\usepackage[bmargin=20mm,tmargin=20mm,lmargin=20mm,rmargin=20mm]{geometry}
\usepackage[hypertexnames=false]{hyperref}
\hypersetup{
    colorlinks,
    linkcolor={red!60!black},
    citecolor={green!60!black},
    urlcolor={blue!60!black}
}

\usepackage[capitalize]{cleveref}
\crefname{equation}{}{}

\theoremstyle{plain}
\newtheorem{theorem}{Theorem}[section]

\newtheorem{proposition}[theorem]{Proposition}
\newtheorem{lemma}[theorem]{Lemma}

\newtheorem{problem}[theorem]{Problem}
\theoremstyle{definition}

\usepackage{graphicx}
 \usepackage{svg}

\graphicspath{{images/}} 
\input{tikz}

\allowdisplaybreaks

\begin{document}

\author{
Shagnik Das\,\thanks{Department of Mathematics, National Taiwan University, Taipei 10617, Taiwan. Research supported by Taiwan NSTC grant 113-2628-M-002-008-MY4.}
\thanks{E-mail: {\tt shagnik@ntu.edu.tw}}
\and
Ying-Sian Wu\,\footnotemark[1]
\thanks{E-mail: {\tt puggywu15@gmail.com}}
}

\sloppy

\title{New results on the odd- and unique-Ramsey numbers}

\maketitle

\begin{abstract}
The odd-Ramsey number $r_{\text{odd}}(n,H)$ of a graph $H$ is the minimum number of colors needed to edge-color $K_n$ so that in every copy of $H$ some color occurs an odd number of times, and the unique-Ramsey number  $r_{\text{u}}(n,H)$ is the corresponding notion in which some color is required to occur not only an odd number of times but exactly once.
    
In this paper, we address three questions from previous papers. We show $r_{\text{odd}}(n,K_{s,t})> n^{1/\left(\frac s2+\frac 1{2\lfloor t/8 \rfloor}\right)}$ when $s\leq t$ and $s$ is odd and $t$ is even, which is log-asymptotically tight when $s$ is fixed and $t\to\infty$. Next, we consider the odd-Ramsey number when the host graph to be edge-colored is a super-Dirac graph, and show that in any host graph with minimum degree at least $n/2+2$, the odd-Ramsey number of Hamilton cycles is non-trivial. Finally, we show that $r_\text{u}(n,C_n)> n/4$, which leads to a polynomial gap between $r_\text{odd}(n,C_n)$ and $r_\text{u}(n,C_n)$.
\end{abstract}

\section{Introduction}

\subsection{\textbf{From the classical Ramsey problem to the odd-Ramsey number}}

\noindent The classical $r$\textit{-color diagonal Ramsey number} $R_r(k)$ is the smallest integer $n$ such that every $r$-coloring of $E(K_n)$ contains a monochromatic copy of $K_k$. This problem reamins poorly understood even for small $r$ and $k$. For instance, when $r=2$, early works of Erdős \cite{Erds1947SomeRO} and Erdős and Szekeres \cite{erdos1935combinatorial} established the bounds $2^{k/2}\leq R_2(k)\leq 4^{k}$. After that, not until 2023 did Campos, Griffiths, Morris, and Sahasrabedhe \cite{campos2023exponential} bring the first exponential improvement on the upper bound, further optimized by Gupta, Ndiaye, Norin and Wei \cite{gupta2024optimizingcgmsupperbound}. However, the gap between the lower and upper bounds remains large. There are also recent breakthroughs on the upper bounds for the general case $r\geq 2$, given by Balister et al. \cite{balister2026upperboundsmulticolourramsey}, and the lower bounds on the closely-related \textit{off-diagonal Ramsey numbers}, given by Mattheus and Verstraete \cite{mattheus2024asymptoticsr4t}, but many questions remain open.

In the Ramsey problem, one can consider host graphs  and subgraphs other than cliques. Also, instead of fixing the number of colors, one can fix the host graph and ask for the minimum number of colors required to avoid a problematic coloring pattern in a subgraph. This motivates the \textit{generalized Ramsey number} $f(G,H,q)$ for the graph $H$ in the graph $G$, posed by Erdős and Gy\'{a}rf\'{a}s \cite{Erds1997AVO}, defined to be the minimum number of colors needed to color $E(G)$ so that every copy of $H$ in $G$ receives at least $q$ different colors. Note that if $q=2$ and $G=K_n$, $H=K_k$, then this is in essence the original Ramsey problem. This problem has attracted great attention over the past few decades, see \cite{axenovich2000generalized,bennett2024generalized,bennett2022erd,conlon2015erdHos,eichhorn2000note,fox2009ramsey,mubayi1998edge} for instance.

Aside from counting the number of colors, one can instead impose other conditions on the color patterns appearing on each copy of $H$. The \textit{odd-Ramsey number} $r_{\text{odd}}(G,H)$ of $H$ in $G$ is the minimum number of colors needed to color $E(G)$ so that in every copy of $H$ some color occurs an odd number of times. We write $r_{\text{odd}}
(n,H)$ for $r_\text{odd}(G,H)$ if $G=K_n$. Note that $r_{\text{odd}}(G,H)$ is trivially equal to $1$ if $e(H)$ is odd, but when $e(H)$ is even it is a parameter worth studying. Being a natural Ramsey Theory variant, it is interesting in its own right, and is closely related to the generalized Ramsey number by $r_{\text{odd}}(G,H)\leq f(G,H,\lfloor e(H)/2 \rfloor+1)$, because if every copy of $H$ receives at least $\lfloor e(H)/2\rfloor+1$ different colors, then by the pigeonhole principle some color occurs exactly once, and in particular it occurs an odd number of times.

The odd-Ramsey number   was first implicitly introduced by Alon \cite{alon2024graph} in his work on \textit{graph-codes}. Given a graph $H$, an $H$-code is a family of graphs on a common set of vertices such that the symmetric difference of any two of its members is not isomorphic to $H$. More precisely, making the collection of graphs on vertex set $[n]$ into a vector space $\mathbb F_2^{K_n}$ over $\mathbb F_2$ by identifying each graph with its edge set and defining the sum of two graphs $G_1=([n],E_1)$, $G_2=([n],E_2)$ to be the graph on $[n]$ whose edge set is $E_1\Delta E_2$, Alon defined
\begin{align*}
    d_H(n)=\frac 1{2^{\binom n2}}\max \Big\{|\mathcal C|:\mathcal F\subseteq \mathbb F_2^{K_n} \text{, and there are no }G_1,G_2\in \mathcal C\text{ such that }G_1+G_2\simeq H\Big\},
\end{align*}
the maximum fraction of the number of graphs in an $H$-code, and implicitly showed that $r_{\text{odd}}(n,H)$ is closely related to $d_H(n)$ as follows. The proof was explicitly given by Versteegen \cite{versteegen2024upperboundslineargraph}.
\begin{theorem}[Alon \cite{alon2024graph}]
    Let $H$ be a graph. If $r_\mathrm{odd}(n,H)$ is sufficiently large, then
    \begin{align*}
        d_H(n)\geq r_\mathrm{odd}(n,H)^{-(e(H)/2+1)}.    
    \end{align*}
\end{theorem}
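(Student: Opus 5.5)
Set $k=r_{\mathrm{odd}}(n,H)-1$. By the definition of the odd-Ramsey number, every edge-coloring of $K_n$ with $k$ colors contains a copy of $H$ in which every color occurs an even number of times. The plan is to turn this into a lower bound on the size of an $H$-code by a linear-algebra construction over $\mathbb F_2$ that is \emph{dual} to colorings, and then optimize with a random choice. Write $m=r_{\mathrm{odd}}(n,H)$ and $e=e(H)$, with $e$ even; if $e$ is odd, then $m=1$ and the statement is vacuous for large $m$.

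\textbf{Step 1: the construction.} Take a coloring $c\colon E(K_n)\to[m]$ such that every copy of $H$ has some color class of odd size; such a coloring exists by the definition of $m$. Fix an integer $t$, to be chosen later. Pick independent uniformly random vectors $v_1,\dots,v_m\in\mathbb F_2^t$ and label each edge $f$ by $\lambda(f)=v_{c(f)}$. This gives a linear map $\Lambda\colon\mathbb F_2^{K_n}\to\mathbb F_2^t$, $G\mapsto\sum_{f\in G}\lambda(f)$. Let $\mathcal C$ be a fiber of $\Lambda$, for instance its kernel, which is a linear code of codimension at most $t$. Then $|\mathcal C|\ge 2^{\binom n2-t}$. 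For $G_1,G_2\in\mathcal C$ we have $\Lambda(G_1+G_2)=0$. So $\mathcal C$ is an $H$-code provided $\Lambda(H')\neq 0$ for every copy $H'$ of $H$.

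\textbf{Step 2: computing $\Lambda(H')$.} We have $\Lambda(H')=\sum_i a_i v_i$, where $a_i$ is the parity of the number of edges of $H'$ of color $i$. By the choice of $c$, some $a_i=1$. Hence $\Lambda(H')$ is a uniformly random vector, and it vanishes with probability $2^{-t}$. Copies of $H$ with the same parity vector $a\in\mathbb F_2^m$ give the same event. Moreover $a$ has support of size at most $e$, and the number of $a_i$ equal to $1$ has the same parity as $e$, so it is even. The number of distinct bad events is therefore at most $\sum_{j\le e}\binom mj\le m^{e}$, and even the cruder count $\le m^{e}$ suffices. A union bound shows that some choice of $v_1,\dots,v_m$ works once $2^t>m^{e}$.

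\textbf{Step 3: choosing $t$, the main obstacle.} The straightforward choice gives $d_H(n)\ge 2^{-t}\approx m^{-e}$. This has the wrong exponent: we need $m^{-(e/2+1)}$. This is the key obstacle, and the step where the real work lies. To fix it, I would exploit the evenness more cleverly. Replace the random labels by labels in $\mathbb F_2^t$ chosen as the columns of a parity-check matrix of a binary BCH-type code of length $m$ and designed distance $e+1$. Any $e$ of these columns are linearly independent, and such a matrix exists with $t\approx (e/2)\log_2 m$ rows. Because the bad vectors $a$ have even weight, one may even restrict to the even-weight subcode, which costs at most one extra coordinate. Then $\Lambda(H')=\sum_i a_iv_i\ne0$ whenever $0<|a|\le e$, which holds deterministically. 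The resulting code $\mathcal C=\ker\Lambda$ has density at least $2^{-t}\ge 2^{-((e/2)\lceil\log_2(m+1)\rceil+1)}$. For $m$ sufficiently large, this is at least $m^{-(e/2+1)}$; the rounding of $\log_2(m+1)$ is absorbed by the extra $+1$ in the exponent. The delicate part is precisely this BCH dimension count $t\le (e/2)\lceil\log_2(m+1)\rceil$, together with handling the rounding. It is also where the hypothesis ``$r_{\mathrm{odd}}(n,H)$ sufficiently large'' enters.
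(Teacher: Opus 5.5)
Your Step 3 argument is correct and is essentially the standard Alon--Versteegen proof that the paper cites. You label each colour class by a distinct column of a BCH parity-check matrix of designed distance $e(H)+1$, with $(e(H)/2)\lceil\log_2(m+1)\rceil$ rows, and take the kernel of the induced linear map; the estimate $2^{\lceil\log_2(m+1)\rceil}\le 2(m+1)$ then gets absorbed into the extra factor of $m$ for large $m$. The random-label detour in Steps 1--2 and the extra even-weight coordinate are unnecessary but harmless.
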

One can moreover define $d_H^{\text{lin}}(n)$ by requiring the $H$-code $\mathcal C$ in the definition of $d_H(n)$ to be linear. In this case, it turns out that $r_\text{odd}(n,H)^{-1}\geq d_H^{\text{lin}}(n)\geq r_\text{odd}(n,H)^{-(e(H)/2+1)} $, again shown implicitly by Alon \cite{alon2024graph} and explicitly by Versteegen \cite{versteegen2024upperboundslineargraph}. Therefore, any bound on $r_\text{odd}(n,H)$ or $d_H^{\text{lin}}(n)$ will give a bound on the other. These relations to graph-codes further justify the importance of the odd-Ramsey numbers.
\subsection{Recent progress on the odd-Ramsey number and its variants}

Despite being a very new Ramsey-type concept, there are already a number of results on the odd-Ramsey numbers. Alon \cite{alon2024graph} implicitly dealt with the odd-Ramsey number of stars, matchings and certain family of cliques. Later on, several groups of authors have been seeking to determine $r_\text{odd}(n,K_t)$. Cameron and Heath \cite{cameron2023new}
 proved that $r_{\text{odd}}(n,K_4)=n^{o(1)}$, and then in 2023 Ge, Xu, and Zhang \cite{ge2023newvarianterdhosgyarfasproblem}  conjectured  $r_\text{odd}(n,K_t)=n^{o(1)}$ for all $t$. Bennett, Heath, and Zerbib \cite{bennett2023edge} and, independently, Ge, Xu, and Zhang \cite{ge2023newvarianterdhosgyarfasproblem} proved the $t=5$ case. The next nontrivial case is $t=8$, proved by Yip \cite{yip2025varianterdhosgyarfasproblemk8} in 2024. As for an arbitrary graph $H$ with an even number of edges, in 2024 Versteegen \cite{versteegen2024upperboundslineargraph} proved the general lower bound $r_\text{odd}(n,H)=\Omega(\log n)$, and showed that we further have a polynomial lower bound if $H$ has an even-decomposition, namely a decomposition into independent sets in a particular way. Versteegen showed that almost all graphs with an even number of edges have even-decompositions. Closely related to this paper, in 2024 Boyadzhiyska, Das, Lesgourgues, and Petrova \cite{boyadzhiyska2024oddramseynumberscompletebipartite} discussed the odd-Ramsey number of complete bipartite graphs, obtaining results for the family of all spanning complete bipartite graphs on $n$ vertices, its subfamilies, and for individual fixed $K_{s,t}$. \linebreak In particular, they showed $r_\text{odd}(n,K_{s,t})\geq (1+o(1))(\frac nt)^{1/\lceil \frac s2\rceil}$ if $s\leq t$ and $st$ is even. On the other hand, Axenovich, F\"{u}redi, and Mubayi \cite{axenovich2000generalized} proved that $f(K_n,K_{s,t},st/2+1)\leq O(n^{\frac{2s+2t-4}{st}})$, and this gives the best current upper bound $ r_\text{odd}(n,K_{s,t})\leq O(n^{\frac{2s+2t-4}{st}})$ on the odd-Ramsey number of $K_{s,t}$ when $st$ is even.

 There are also results on $r_{\text{odd}}(G,H)$ where $G\ne K_n$. In 2023, Bennett, Heath, and Zerbib \cite{bennett2023edge} proved that $r_{\text{odd}}(K_{n,n},K_{2,2})=n/2+o(n)$, and in 2025 Crawford, Heath, Henderschedt, Schwieder, and Zerbib \cite{crawford2025oddramseynumbersmultipartite} showed that $r_{\text{odd}}(K_{n,n},K_{2,t})=n/t+o(n)$. Recently in 2025, Boyadzhiyska, Das, Lesgourgues, and Petrova \cite{boyadzhiyska2025oddramseynumbershamiltoncycles} initiated the study of $r_\text{odd}(G,H)$ where $H=C_n$ is a Hamilton cycle for even $n$ and $G$ is a \textit{Dirac graph}, namely graphs satisfying Dirac's condition $\delta(G)\geq n/2$. Dirac's condition guarantees the existence of many Hamiltonian cycles in the graph. For instance, Nash-Williams \cite{nash1971edge} proved that every Dirac graph contains linearly many edge-disjoint Hamiltonian cycles, while S\'{a}rk\"{o}zy, Selkow, and Szemer\'{e}di \cite{sarkozy2003number} showed that every $n$-vertex Dirac graph contains at least $c^nn!$ many Hamilton cycles. It is then natural to ask how degree conditions on $G$ guarantee the quantity of the minimum possible $r_{\text{odd}}(G,H)$. To this end, for all even integer $n\geq 4$, Boyadzhiyska, Das, Lesgourgues, and Petrova \cite{boyadzhiyska2025oddramseynumbershamiltoncycles} defined what we call the \textit{Sparse odd-Ramsey number} of Hamilton cycles by
 \begin{align*}
     r_\text{odd}(n,d;C_n)=\min\{ r_\text{odd}(G,C_n):v(G)=n,\text{ }\delta(G)\geq d \}.
 \end{align*}
This is a generalization of $r_\text{odd}(n,C_n)$ since $r_\text{odd}(n,C_n)=r_\text{odd}(n,n-1;C_n)$. Note that if $0\leq d<n/2$, then there is always an $n$-vertex graph $G$ with $\delta(G)=d$ containing no Hamilton cycles, so we shall consider $r_\text{odd}(n,d;C_n)$ when $d\geq n/2$. In \cite{boyadzhiyska2025oddramseynumbershamiltoncycles} it was   shown that 
$r_{\text{odd}}(n,n/2+k;C_n)\leq \min\{2k+2,\frac{3k}{\sqrt{2n}}+\frac{3\sqrt{2n}}{4}+3\}$\linebreak for all $0\leq k\leq n/2$. As for the lower bounds, it is trivial that $r_\text{odd}(n,d;C_n)\geq 2$ for all $d\geq n/2$ because any $n$-vertex graph $G$ with $\delta(G)\geq d$ satisfies Dirac's condition and contains a Hamilton cycle. Beyond this trivial bound, closely related to this paper, in \cite{boyadzhiyska2025oddramseynumbershamiltoncycles} is was shown that $r_{\text{odd}}(n,n/2+4;C_n)\geq 3$. That is, a minimum degree  $\delta(G)\geq n/2+4$ guarantees nontrivial $r_\text{odd}(G,C_n)$.

Finally, in 2025 Yip \cite{yip2025varianterdhosgyarfasproblemk8} (also introduced by Radoicic and studied by Axenovich and Conlon in unpublished works) defined the \textit{unique Ramsey number} $r_\text{u}(G,H)$ to be the minimum number of colors needed to color $E(G)$ so that in every copy of $H$ some color occurs exactly once. This is worth studying because as we discussed earlier, the trivial upper bound $r_\text{odd}(G,H)\leq f(G,H,\lfloor e(H)/2\rfloor+1)$ on the odd-Ramsey number actually comes from the relation $r_\text{odd}(G,H)\leq r_\text{u}(G,H)\leq  f(G,H,\lfloor e(H)/2\rfloor+1)$, so proving or disproving a separation between $r_\text{odd}(G,H)$, $r_\text{u}(G,H)$ or $r_\text{u}(G,H)$, $f(G,H,\lfloor e(H)/2\rfloor+1)$ will be valuable. Yip \cite{yip2025varianterdhosgyarfasproblemk8}
showed that if $H$ is not a clique and has no isolated vertices, then $r_\text{u}(n,H)=n^{\Omega(1)}$, while for cliques he
conjectured $r_\text{u}(K_t)=n^{o(1)}$ for all $t$, which is claimed to be true for $t\leq 7$ by Conlon in an unpublished work. As for separation results, 
Boyadzhiyska, Das, Lesgourgues, and Petrova \cite{boyadzhiyska2024oddramseynumberscompletebipartite} showed a constant factor gap between $r_\text{odd}(n,\mathcal H)$ and $r_\text{u}(n,\mathcal H)$, where $\mathcal H$ is the collection of all cliques of size at most $n$. In \cite{boyadzhiyska2025oddramseynumbershamiltoncycles} these authors further showed $(\frac{\sqrt 2}2+o(1))\sqrt n\leq r_\text{odd}(n,C_n)\leq \frac{3\sqrt2}2\sqrt n$ and posed the question to determine $r_{\text {u}}(n,C_n)$. They showed in personal correspondence that $r_{\text {u}}(n,C_n)\leq n/2+1$, the construction given by Proposition \ref{Upper}, and they believed that
$r_\text{u}(n,C_n)$ is linear in $n$, far away from the $\Theta(\sqrt n)$ magnitude of $r_\text{odd}(n,C_n)$. 
\begin{proposition}[Boyadzhiyska, Das, Lesgourgues, and Petrova]\label{Upper}
    Let $n\geq 4$ be an even integer, then
    \begin{align*}
        r_\mathrm{u}(n,C_n)\leq n/2+1.
    \end{align*}
\end{proposition}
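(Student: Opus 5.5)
The plan is to write down an explicit coloring of $E(K_n)$ with $n/2+1$ colors and then verify it by a parity-and-counting argument over an arbitrary Hamilton cycle $C$. I have not found a coloring that I can verify completely; the verification step below is where the argument is unfinished. Write $n=2m$.

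\textbf{A baseline that fails.} The most natural scheme is a ``star'' or minimum-index coloring: fix an ordering $v_1,\dots,v_n$, take a set $B=\{v_1,\dots,v_{b}\}$ of ``low'' vertices, and give an edge $v_iv_j$ with $\min(i,j)=i\le b$ color $i$. For a Hamilton cycle $C$, color $i$ then occurs exactly $c_i$ times, where
\begin{align*}
c_i=\bigl|\{u\in N_C(v_i): u \text{ has larger index than } v_i\}\bigr|\in\{0,1,2\}.
\end{align*}
Every $c_i=1$ gives a uniquely occurring color, so a bad cycle must have $c_i\in\{0,2\}$ for all $v_i\in B$. This means every low vertex is a local minimum or a local maximum of $C$ with respect to the ordering. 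Taking $b=n-1$, i.e.\ coloring every edge by its smaller endpoint, does not work even with $n-1$ colors. For $n=4$, the cycle $1\!-\!4\!-\!2\!-\!3\!-\!1$ has $c=(2,2,0)$, so no color occurs exactly once. So a pure min-index coloring is not enough, and the top part needs extra structure.

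\textbf{The proposed coloring.} I will use roughly $b=m-1$ star colors on the low vertices. The remaining $2$ colors go on the edges inside $T=V\setminus B$, where $|T|=m+1$. Within $T$ I split the edges into one class spanned by a distinguished structure (for instance the star at a fixed vertex $t_0\in T$, or a fixed perfect matching or path on $T$) and one class for the rest.

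\textbf{Verification step.} The first facts are the counting ones. Suppose every $c_i$ with $v_i\in B$ is even. Then the number $e_C(T)$ of cycle edges inside $T$ satisfies
\begin{align*}
e_C(T)=n-\sum_{v_i\in B}c_i,
\end{align*}
so $e_C(T)$ is even. Moreover, $C[T]$ is a linear forest with at most $|B|$ components, so
\begin{align*}
e_C(T)\ge |T|-|B|=2.
\end{align*}
I would then use the alternation structure to pin down how $C[T]$ looks. Every low vertex with $c_i=0$ has both its $C$-neighbors of smaller index, hence inside $B$. Consequently the $T$-segments of $C$ are attached only to low local minima. The last ingredient is the distinguished structure inside $T$, whose job is to force one of the two $T$-colors to occur exactly once.

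\textbf{Main obstacle.} Designing that structure correctly is where the difficulty lies, and I have not yet found one that works. The naive choice, the star at $t_0$, fails as things stand. The number of $C$-edges at $t_0$ inside $T$ is some $d\in\{0,1,2\}$. If $d\neq 1$, then both $T$-colors occur an even number of times, since $e_C(T)$ is even. So the two $T$-color classes must be chosen more cleverly. One option is to define the $T$-coloring recursively, coloring $T$ by the same scheme. Another is to shift the boundary between $B$ and $T$ so that the parity constraint together with $e_C(T)\ge 2$ forces $e_C(T)=2$ with the two edges in different classes. Which of these works is still open. The first thing I would try is to work through small cases ($n=4,6$) to find a coloring meeting the $n/2+1$ bound, and then extract the general pattern from them.
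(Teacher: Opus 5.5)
Your proposal does not prove the statement, since no coloring is ever exhibited. Worse, the framework you set up cannot be completed by any choice of the ``distinguished structure'' in $T$.

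Take any coloring of your form: min-index star colors on $B=\{v_1,\dots,v_{m-1}\}$ and an arbitrary $2$-coloring of the edges inside $T$, where $|T|=m+1\ge 3$. Some two edges $e,e'$ inside $T$ share a color, and $e\cup e'$ is a cherry or a $2$-matching. Adding the remaining vertices of $T$ as singletons gives a spanning linear forest of $T$ with exactly $m-1=|B|$ components. Arrange these components cyclically and insert one vertex of $B$ between consecutive ones. This gives a Hamilton cycle $C$ in which every vertex of $B$ has both $C$-neighbors in $T$ and no edge of $C$ lies inside $B$. Hence every star color occurs exactly twice on $C$, and the two $T$-colors occur $2$ and $0$ times, so no color occurs exactly once.

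So the cycle attains your bound $e_C(T)\ge 2$ with a monochromatic pair. No coloring of $T$ can force the two $T$-edges into different classes, because the cycle chooses them; for $n=4$ this is just pigeonhole on a $2$-colored triangle. The underlying problem is that your star centers form the \emph{smaller} side, so a Hamilton cycle can sandwich every center between two non-centers and see each star color twice.

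The missing idea is to put the star centers on the \emph{larger} side and merge all remaining edges into a single color. The paper partitions $V(K_n)=V_1\sqcup V_2$ with $V_1=\{v_1,\dots,v_{n/2+1}\}$ and $|V_2|=n/2-1\ge 1$. Every edge inside $V_1$ or inside $V_2$ gets color $1$, and each crossing edge $v_iu$ with $u\in V_2$ gets color $i$, for $n/2+1$ colors in total. Since $|V_1|>|V_2|$, every Hamilton cycle $C$ has an edge inside $V_1$, hence a maximal subpath $v_{i_1}\dots v_{i_k}$ of $C$ inside $V_1$ with $k\ge 2$. By maximality and $V_2\ne\emptyset$, each end of this subpath has exactly one $C$-neighbor in $V_2$. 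A color $i\ne 1$ appears only on crossing edges at $v_i$, so every color in $\{i_1,i_k\}\setminus\{1\}$ occurs exactly once on $C$. This set is nonempty because $i_1\ne i_k$.
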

\begin{proof}
    We shall $(n/2+1)$-color $K_n$ so that every Hamilton cycle uses some color exactly once. Partition
    \begin{align*}
        V(K_n)=V_1\sqcup V_2,\quad |V_1|=n/2+1,\quad |V_2|=n/2-1.
    \end{align*}
    Let $V_1=\{v_1,\ldots,v_{n/2+1}\}$. Define the coloring $\chi:V(K_n)\to [n/2+1]$ by
    \begin{align*}
        \chi(e)=\begin{cases*}
            1,&$\text{ if }e\subseteq V_1\text{ or }e\subseteq V_2$,\\
            i,&$\text{ if }e\nsubseteq V_1\text{ and }e\nsubseteq V_2\text{ and }v_i\in e$.
        \end{cases*}
    \end{align*}
    Let $C$ be any Hamilton cycle, then $E(C)\cap E(K_n[V_1])\ne\emptyset$ since $|V_1|>|V_2|$. Let $v_{i_1}v_{i_2}\ldots v_{i_k}\subseteq C$ be a maximal subpath of $C$ contained in $V_1$, then any color in $\{i_1,i_k\}\setminus\{1\}$ occurs exactly once in $C$.
\end{proof}

\subsection{Our results}
In \cite[Theorem 1.5]{boyadzhiyska2024oddramseynumberscompletebipartite} it was shown that $r_\text{odd}(n,K_{s,t})\geq (1+o(1))(\frac nt)^{1/\lceil \frac s2\rceil}$ if $s\leq t$ and $st$ is even. Together with the upper bound $O(n^{\frac{2s+2t-4}{st}})$, if we fix an even $s$ and let $t\to \infty$, then the bounds are log-asymptotically tight. That is, the exponents of the bounds are asymptotically tight. Our first result is an improvement on the lower bound when $s$ is odd and $t\geq 8$, showing that for odd $s$ we also have log-asymptotically tight bounds as $t\to\infty$ while keeping $st$ even.
\begin{theorem}\label{thmKST}
    Let $3\leq s\leq t$ be fixed integers, $s$ be odd and $t$ be even, then 
    \begin{align*}
        r_\mathrm{odd}(n,K_{s,t})> n^{1/(\frac s2+\frac 1{2\lfloor t/8\rfloor})}.
    \end{align*}
\end{theorem}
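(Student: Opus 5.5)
Set $k=\lfloor t/8\rfloor$ and $q=r_{\mathrm{odd}}(n,K_{s,t})$. We argue by contradiction: fix a $q$-coloring $\chi$ of $E(K_n)$ in which every copy of $K_{s,t}$ has some color appearing an odd number of times, and suppose $q\le n^{1/(\frac s2+\frac1{2k})}$, i.e.\ $q^{s+1/k}\le n^2$. The goal is to build a copy of $K_{s,t}$ in which every color appears an even number of times. Write $s=2m+1$. The plan has three steps.

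\textbf{Step 1: pairs of rows with equal profiles.} For a $2m$-set $S$ of vertices we work with the color vectors of columns. For each vertex $v\notin S$, record its \emph{profile} $(\chi(uv))_{u\in S}\in[q]^{2m}$. By pigeonhole, once $n\gg q^{2m}$ the set $S$ has many vertices with equal profiles. Pairing these vertices up, every color on $S$ is used an even number of times, since each column contributes twice. This pairing trick is essentially how the earlier bound $(n/t)^{1/\lceil s/2\rceil}$ is proved.

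\textbf{Step 2: handling the extra odd vertex.} The leftover row $w$ (the $(2m+1)$-th vertex of the $s$-side) contributes $t$ edges, and their colors must also pair up. We choose the $t$ column vertices as $t/2$ pairs $\{x_j,y_j\}$ with equal $S$-profile \emph{and} with $\chi(wx_j)$ matched against $\chi(wy_{j'})$ in some parity-balanced way. The gain over the old bound comes from not insisting that $\chi(wx_j)=\chi(wy_j)$ within each pair. Instead, we only need the multiset $\{\chi(wx):x\in X\}$ to have all multiplicities even.

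To arrange this, we would form an auxiliary structure. Fix $S$ and $w$, and pick a large class of vertices sharing the same $S$-profile; it has size about $n/q^{2m}$. Each vertex $x$ in this class carries a single label $\chi(wx)\in[q]$. Any $t$ vertices of the class whose labels have even multiplicities give the desired $K_{s,t}$. This is trivially available once the class has size larger than $q+t$, which only yields the old bound. To do better, we would let $w$ vary as well and also use two $s$-sets sharing $S$, with the blocks of $8$ coming from combining configurations.

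\textbf{Step 3: the counting, and the main obstacle.} The main step is an averaging/dependent-random-choice count that trades exponent $1$ on the odd row for exponent $1/k$. Concretely, we would count pairs (column pair $\{x,y\}$, set $S\cup\{w\}$) with equal profiles, viewing each column pair as contributing a vector in $[q]^{2m}\times[q]^2$. We would then seek $k$ groups of $8$ columns, each group arranged as four pairs forming a ``cycle'' $\chi(wx_1)=\chi(wy_2)$, $\chi(wx_2)=\chi(wy_3),\dots$, which closes up and makes every color even.

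The number of candidate configurations, compared with the $q^{s+1/k}$ constraints, should come out via a Hölder/Cauchy–Schwarz argument of the form $\sum_{\text{classes}} d^{2k}\ge n^{2k}/q^{\dots}$. This is how the exponent $\frac1{2k}$ should arise, and the leftover $t-8k$ columns are filled by equal-label pairs. I expect getting the exact exponent $\frac s2+\frac1{2\lfloor t/8\rfloor}$, and in particular justifying the block size $8$, to be the hardest part. I would first try the Hölder count on closed walks of length $2k$ in an auxiliary bipartite graph between column-pairs and colors.
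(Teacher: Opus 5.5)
Your proposal has a genuine gap. Step 3 is where the whole improvement has to happen, and it is only a hope. Moreover, the framework of Steps 1--2 cannot support it.

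\textbf{Step 1 is too weak.} Fixing the $2m$-set $S$ and pigeonholing columns by their profile in $[q]^{2m}$ gives classes of size about $n/q^{2m}$. In the critical regime $n\approx q^{m+\frac12+\frac1{2k}}$ this is $q^{\frac12+\frac1{2k}-m}\le 1$ whenever $m,k\ge 1$, so there is nothing to pair. This is also not how the earlier bound is proved. That argument (Lemma~\ref{strongEC}) is a K\H{o}v\'ari--S\'os--Tur\'an-type double count of even neighbourhoods. It produces a $K_{2m,t'}$ with $t'\approx n/q^{m}$ in which \emph{each} column individually sees every color an even number of times into the small side. Columns need not share profiles, and the cost is $q^{m}$ rather than your $q^{2m}$.

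\textbf{A single odd-side vertex leaves no slack.} Once the $S$-part is even, the labels $\chi(wx)$, $x\in X$, must all have even multiplicity. That happens iff they split into equal pairs. Your cycles of four pairs, with $\chi(wx_1)=\chi(wy_2)$, $\chi(wx_2)=\chi(wy_3)$, and so on, are just another such pairing. So any argument of this shape needs roughly $q+t$ admissible columns. Even with the correct Step 1, it returns exactly the old bound $n^{1/\lceil s/2\rceil}$.

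\textbf{The missing idea} is to handle \emph{three} odd-side vertices $w_1,w_2,w_3$ jointly.
\begin{enumerate}
\item Apply Lemma~\ref{strongEC} with $s'=s-3$ and $t'=q^{\frac32+\frac1{2\lfloor t/8\rfloor}}$ to get a strongly-even-chromatic $K_{s-3,t'}$ on $V_1\sqcup V_2$.
\item To each $u\in V_2$ attach the set $e_u\subseteq[q]$ of colors occurring an odd number of times among $\chi(uw_1),\chi(uw_2),\chi(uw_3)$. This set has size $1$ or $3$.
\item Either $t/2$ disjoint pairs of columns have equal $e_u$, which gives an even-chromatic $K_{3,t}$ directly. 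Or the sets $e_u$ form a hypergraph on $[q]$ with $\Omega(q^{\frac32+\frac1{2\lfloor t/8\rfloor}})$ distinct edges of size at most $3$.
\item In the second case, the Naor--Verstraete even-cover theorem gives $t$ of these edges covering every color evenly. This is an even-chromatic $K_{3,t}$ whose $t$-side lies in $V_2$.
\item Together with strong evenness on $V_1$, this yields an even-chromatic $K_{s,t}$.
\end{enumerate}

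The block size $8$ and the term $\frac1{2\lfloor t/8\rfloor}$ are inherited from the Naor--Verstraete theorem. Your sketched H\"older count on closed walks neither states nor reproves an even-cover result for edges of size $3$. With a single odd row, every column contributes a single color, so there are no nontrivial cancellations for such a count to exploit.
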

Next, in \cite[Theorem 1.3]{boyadzhiyska2025oddramseynumbershamiltoncycles} it was shown that $r_\text{odd}(n,n/2+4;C_n)\geq 3$, and the authors tend to believe that the transition from $r_\text{odd}(n,n/2;C_n)=2$ to $r_\text{odd}(n,n/2+4;C_n)\geq 3$ happens at minimum degree $n/2+1$. Our second result shows that $r_\text{odd}(n,d;C_n)$ is already nontrivial when $d=n/2+2$, leading us closer to determining the true transition.
\begin{theorem}\label{thmDIRAC}
    Let $n\geq 4$ be an even integer, then
    \begin{align*}
        r_\mathrm{odd}(n,n/2+2;C_n)\geq 3.
    \end{align*}
\end{theorem}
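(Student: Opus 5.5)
We must show: every graph $G$ on $n$ vertices (with $n$ even) and $\delta(G)\ge n/2+2$, under every red/blue coloring of $E(G)$, contains a Hamilton cycle in which both colors occur an even number of times. Since $n$ is even, it suffices to find a Hamilton cycle with an even number of red edges. We argue by contradiction: suppose every Hamilton cycle of $G$ has an odd number of red edges. The strategy is to use local switching operations that transform one Hamilton cycle into another, and show that the parity constraint forces a rigid structure on the coloring, which then contradicts the minimum degree.

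\textbf{Step 1: local switchings.} Fix a Hamilton cycle $C=v_1v_2\cdots v_nv_1$. For indices $i<j$ with $v_iv_j, v_{i+1}v_{j+1}\in E(G)$, the $2$-opt move replaces $v_iv_{i+1},v_jv_{j+1}$ by $v_iv_j,v_{i+1}v_{j+1}$ and gives a new Hamilton cycle. The parity assumption then forces
\[
\chi(v_iv_{i+1})+\chi(v_jv_{j+1})\equiv\chi(v_iv_j)+\chi(v_{i+1}v_{j+1}) \pmod 2.
\]
Similarly, a $3$-opt segment insertion that moves a vertex $v_k$ between $v_i$ and $v_{i+1}$ gives a parity identity on the five edges involved. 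Since $\delta\ge n/2+2$, a pigeonhole count on neighbourhoods along $C$ gives many such configurations. For each $i$, the sets $\{j: v_iv_j\in E\}$ and $\{j: v_{i+1}v_{j+1}\in E\}$ have sizes at least $n/2+2$ in $\mathbb Z_n$, so they intersect in at least $4$ indices. This slack of $+2$, compared with the $+4$ in \cite{boyadzhiyska2025oddramseynumbershamiltoncycles}, should still be just enough.

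\textbf{Step 2: deduce a vertex-potential structure.} Assign to each vertex $v$ a label $\phi(v)\in\mathbb F_2$ and test whether $\chi(uv)=\phi(u)+\phi(v)+c$ for a constant $c$. On a Hamilton cycle such a coloring has red parity $cn\equiv 0$, because $n$ is even; this contradicts the assumption. The real task is therefore to show that the odd-parity hypothesis forces $\chi$ to be "almost" of this form, with a defect that must itself be rigid. The natural target comes from the extremal examples, which look like a split into two near-halves $A,B$ with edges colored according to the crossing structure. I would show that the $2$-opt identities make $\chi(uv)+\chi(u'v')$ depend only on "switchable" pairs. Then, using the high connectivity given by $\delta\ge n/2+2$ (any two vertices have at least $4$ common neighbours), I would propagate this to conclude that $\chi$ is a potential coloring plus a correction supported on a cut.

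\textbf{Step 3: finish with a parity-changing cycle.} Once the structure is known, construct directly two Hamilton cycles whose numbers of red edges differ by an odd amount, for example by choosing how many times the cycle crosses the cut. This is possible because $\delta\ge n/2+2$ makes both sides dense and gives at least two disjoint cross edges with flexible endpoints. One of the two cycles then has an even number of red edges, a contradiction.

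The main obstacle is Step 2. Extracting global structure from the local switching identities with only $+2$ slack is delicate: the common-neighbourhood counts give just $4$ options, and the degenerate cases need careful handling. These are the cases where the switches all land on the same edge, or where $G$ is close to two cliques of size about $n/2$ joined sparsely. I would first try to handle Step 2 for a fixed $C$ by following the corresponding argument in \cite{boyadzhiyska2025oddramseynumbershamiltoncycles} and tightening each counting step by one or two.
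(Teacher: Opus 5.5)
Your proposal has a genuine gap. Step 2, which you yourself flag as the main obstacle, contains no argument, and Step 1 does not provide the right tool for it. For a fixed Hamilton cycle $C$, your count only guarantees about two nondegenerate 2-opt moves per index $i$, because two of your four common indices are the trivial choices $j=i\pm 1$. That is roughly $n$ parity identities for a graph with $\Theta(n^2)$ edges. Each identity only says that a particular $4$-cycle lying ``parallel'' to $C$ is even-chromatic, and you give no mechanism by which these would propagate to a global structure.

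What is needed is control over \emph{every} $4$-cycle and every $6$-cycle of $G$. For that, the Hamilton cycles must be built around the given short cycle. This is the paper's route. If $G$ contains any odd-chromatic $C_4$ or any odd-chromatic $C_6$, it has an even-chromatic Hamilton cycle (Propositions \ref{C4} and \ref{C6}). The proof realises that short cycle, or an edge set of the same colour parity, as the symmetric difference of two Hamilton cycles. With no odd-chromatic $C_4$, any two vertices either agree on all common neighbours or disagree on all of them. With no odd-chromatic $C_6$, agreement is an equivalence relation with at most two classes $A,B$. This is exactly your potential structure with $\phi$ the indicator of $B$: for each $u$, $\chi(xu)+\phi(x)$ is constant over $x\in N(u)$, and connectivity then forces $\chi(xu)=\phi(x)+\phi(u)+c$. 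That already finishes the proof. Your Step 3 is therefore unnecessary, and as described it could not work: under such a colouring every cycle crosses the cut an even number of times, so all Hamilton cycles have the same (even) parity.

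The real content, which is absent from your proposal, is this embedding step. It needs a Hamilton path with prescribed endpoints in the graph $G'$ left after deleting vertices of the short cycle together with short connecting paths found via Lemma \ref{easy}.
\begin{itemize}
\item For $C_4$, Ore's theorem applies to $G'$ directly.
\item For $C_6$, four to six vertices are deleted, so $\delta(G')$ can fall to around $v(G')/2$. There Hamilton-connectedness genuinely fails. This is exactly why the argument of \cite{boyadzhiyska2025oddramseynumbershamiltoncycles} does not survive ``tightening each counting step by one or two''.
\end{itemize}
The paper needs a new ingredient, Lemma \ref{strongOre}. It either gives Hamilton-connectedness of $G'$ or, in the borderline case, shows that the minimum-degree vertices form an independent set, any two of which are joined by a Hamilton path. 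Such low-degree vertices are forced to be adjacent to (almost) all deleted vertices. The paper then swaps the original odd-chromatic $C_6$ for a new one through them, using Proposition \ref{C4} to transfer the colour parity. When more than half of $G'$ has degree $v(G')/2$, it uses a separate Dirac/Nash-Williams splicing argument. (A minor point: your 3-opt vertex insertion changes six edges, not five.)
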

Finally, in \cite[Question 5.5]{boyadzhiyska2025oddramseynumbershamiltoncycles} the authors asked for $r_\text{u}(n,C_n)$. Our last result shows that it is indeed linear in $n$ as they believed, thus demonstrating a polynomial gap between $r_\text{odd}(n,C_n)$ and $r_\text{u}(n,C_n)$. Moreover, with the upper bound in Proposition \ref{Upper}, this determines $r_\text{u}(n,C_n)$ up to a multiplicative factor of $2$.
\begin{theorem}\label{thmUNIQ}
    Let $n\geq 4$ be an integer, then 
    \begin{align*}
        r_\mathrm{u}(n,C_n)> n/4.
    \end{align*}
\end{theorem}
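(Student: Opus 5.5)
The plan is to show directly that every edge-coloring $\chi$ of $K_n$ with $k\le n/4$ colors contains a Hamilton cycle in which every color appears either zero times or at least twice. The cycle will be assembled from small monochromatic pieces. All colors used inside the pieces then automatically occur at least twice, and the real work is to join the pieces so that no joining color occurs exactly once.

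\textbf{Step 1 (monochromatic cherries).} Greedily extract vertex-disjoint cherries $a$--$v$--$b$ with $\chi(va)=\chi(vb)$. While the set $R$ of unused vertices has $|R|\ge k+2$, any $v\in R$ sends $|R|-1>k$ edges into $R$. By pigeonhole two of these edges share a color, which gives a cherry inside $R$. The process stops with a family $\mathcal P$ of disjoint cherries and a leftover set $L$ with $|L|\le k+1\le n/4+1$. Each cherry contributes two edges of one color. Hence every color that appears as an internal color of some cherry is ``safe'': additional uses of it can never create a unique occurrence.

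\textbf{Step 2 (absorbing the leftover).} The vertices of $L$ must also be placed on the cycle. I would absorb them into monochromatic pieces as well, by repeating the pigeonhole argument with $v\in L$ and two endpoints taken from the large pool of cherry vertices. Concretely, $v\in L$ has about $2n/3$ edges to cherry endpoints, so by pigeonhole some color class at $v$ has size $\gg 1$. Using this, I would either attach $v$ into a longer monochromatic path, or re-pair cherries (swap an endpoint) so that $L$ shrinks. The target outcome of this step is a partition of $V(K_n)$ into vertex-disjoint paths, each of which is monochromatic with at least two edges.

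\textbf{Step 3 (linking; main obstacle).} Order the $m\approx n/3$ paths cyclically and connect consecutive endpoints by link edges. The resulting Hamilton cycle has no uniquely occurring color if every link edge uses a safe color, or if each unsafe link color is used at least twice. Let $S$ be the set of safe colors and $U$ the unsafe ones, so $|U|\le k$. I would define an auxiliary graph on the paths: join two paths if some pair of their endpoints spans an edge whose color lies in $S$. The goal is to show this auxiliary structure contains a Hamilton cycle (with endpoint orientations), for instance via a Dirac-type degree condition. The difficulty is that an adversary can concentrate the unsafe colors on edges between endpoints. The counting must exploit $k\le n/4$: colors that dominate the endpoint edges would have large classes, so they should already have been used as cherry colors. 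To force this, I would make the cherry choice in Step 1 \emph{greedy by popularity}, always taking a cherry in the currently most frequent color at $v$. If a bad color $c\in U$ remains unavoidable on some link, the repair is to use $c$ on two links simultaneously, pairing up the troublesome links. Since $|U|$ is small relative to the number $m\approx n/3$ of links, there is enough room to do this.

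The crux is the linking step: proving that with at most $n/4$ colors one can always route the links through safe colors or pair them up. The constant $1/4$ should come out of balancing $|L|\le k+1$ against the roughly $n/3$ pieces and the degrees in the auxiliary graph.
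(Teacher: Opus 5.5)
Your proposal has a genuine gap. Step~3, which you flag as the crux, is not proved, and the heuristics you offer for it do not hold. Because the cherries of Step~1 need not have distinct colours, nothing forces many colours to become safe.

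Here is an example. Split $V(K_n)=X\sqcup Y$ with $|X|=n/3$ and colour every $X$--$Y$ edge with colour $1$. Colour the edges inside $X$ and inside $Y$ with colours $2,\dots,k$ so that every vertex sees each of them only $O(1)$ times, while each has $\Theta(n)$ edges inside $Y$.

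\begin{itemize}
\item Colour $1$ is then the most frequent colour at every vertex. So your popularity-greedy Step~1, with centres taken in $X$, can legitimately produce $n/3$ cherries $y$--$x$--$y'$ of colour $1$ covering all vertices.
\item Now $S=\{1\}$, every possible link edge lies inside $Y$ and is unsafe, and your auxiliary graph on the paths has no edges at all.
\item Yet each of the colours $2,\dots,k$ has linearly many edges among the endpoints. This refutes the claim that dominant endpoint colours ``should already have been used as cherry colours''.
\item Also, $|U|=k-1$ can be about $n/4$, which is not small compared with the $n/3$ links.
\end{itemize}

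What remains is to find a perfect matching of $Y$ that closes the cherries into a single cycle with no colour used exactly once. That is a problem of the same kind as the theorem itself, now with about $n/4$ colours on $2n/3$ vertices, and you give no argument for it.

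Step~2 is also unsupported. Making $v\in L$ the centre of a new cherry steals two endpoints, which turns two cherries into single edges whose colours may now occur only once. Extending a monochromatic path instead needs an edge of exactly that path's colour, and pigeonhole at $v$ does not provide one.

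The missing idea is a mechanism restricting where unused colours can sit. The paper gets it by requiring the preserved monochromatic structures to have \emph{pairwise distinct} colours, chosen maximally.

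\begin{itemize}
\item Each free (your ``safe'') colour then accounts for at most four removed vertices. So $r\le n/4$ gives $|R|\ge n-4(r-|U|)\ge 4|U|$ for the leftover set $R$. This, not a balance against $n/3$ pieces, is where $1/4$ comes from.
\item Maximality forces every unused colour to appear on at most one edge inside $R$. So $R$ is never absorbed into monochromatic pieces; it is traversed by free-coloured edges via Dirac's theorem.
\item For the interface between pieces and $R$, the paper starts from a maximal family of disjoint monochromatic claws in distinct colours. An exchange argument ensures that no endpoint of a preserved piece is the centre of an unused-colour claw with leaves in $R$. Hence every endpoint has at least $|R|-2|U|$ free edges into $R$.
\item Pieces are first merged along free edges, or equally coloured pairs of edges, between endpoints. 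A colour count shows this leaves at most $(|U|+3)/2$ pieces.
\item These are then merged through cherries centred in $R$ with both edges free, and the cycle is closed with Dirac.
\end{itemize}

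Without some device of this kind controlling the colours on the edges you link through, your Steps~2--3 essentially restate the problem.
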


\textbf{Notation. } Our notation is mostly standard. Unless otherwise specified, the term coloring refers to an edge-coloring and an $r$-coloring uses the color-palette $[r]$.  We say that a subgraph of an edge-colored graph $G$ is \textit{odd-chromatic} if some color occurs an odd number of times in it, and \textit{even-chromatic} otherwise.

The \textit{length} of a path is the number of edges in it. Given distinct vertices $v_1,v_2,\ldots,v_k$, we denote by $v_1v_2\ldots v_k$ the path with edge set $\{v_iv_{i+1}:i\in [k-1]\}$, and by $v_1v_2\ldots v_kv_1$ the cycle with edge-set $\{v_iv_{i+1}:i\in[k-1]\}\cup\{v_kv_1\}$. If $P:v_jv_{j+1}\ldots v_{j+\ell}$ is a path, then we write $v_1v_2\ldots v_{j-1}v_j P v_{j+\ell}v_{j+\ell+1}\ldots v_k$ for the path $v_1v_2\ldots v_k$. Given vertices $a,b$, an $\{a,b\}$\textit{-path} is a path having endpoints $a$ and $b$. A \textit{cherry} is a path of length two, a \textit{claw} is the star $K_{1,3}$, and a \textit{$2$-matching} is a matching containing exactly two edges.

\textbf{Organization of the paper.} We prove Theorem \ref{thmKST} in Section \ref{sec2}, prove Theorem \ref{thmDIRAC} in Section \ref{sec3}, and prove Theorem \ref{thmUNIQ} in Section \ref{sec4}. In section \ref{sec5}, we summarize some related open problems and directions for further study.

\section{Odd-Ramsey numbers of complete bipartite graphs}\label{sec2}

Here we prove Theorem \ref{thmKST}, improving the bound $r_\text{odd}(n,K_{s,t})\geq (1+o(1))(\frac nt)^{1/\lceil \frac s2\rceil}$ in \cite[Theorem 1.5]{boyadzhiyska2024oddramseynumberscompletebipartite} when $s$ is odd and $t\geq 8$. We first observe that when $s$ is even, directly from the proof of \cite[Theorem 1.5]{boyadzhiyska2024oddramseynumberscompletebipartite} one can actually deduce more. Letting $s$ be even and $s\leq t$, we call a colored copy of $K_{s,t}$ \textit{strongly-even-chromatic} if in its bipartition
\begin{align*}
    V(K_{s,t})=V_1\sqcup V_2,\quad |V_1|=s,\quad |V_2|=t,
\end{align*}
for all $u\in V_2$, in the edge set $\{uv:v\in V_1\}$ every color occurs an even number of times. With this notation, when $s$ is even, it turns out that if we color $K_n$ with fewer than $(1+o(1))(\frac nt)^{2/s}$ colors, then not only can we get an even-chromatic copy of $K_{s,t}$, but actually a strongly-even-chromatic one. Therefore, we restate the result of  \cite[Theorem 1.5]{boyadzhiyska2024oddramseynumberscompletebipartite} when $s$ is even as follows, and include its proof for completeness.
\begin{lemma}[Boyadzhiyska, Das, Lesgourgues, and Petrova\cite{boyadzhiyska2024oddramseynumberscompletebipartite}]\label{strongEC}
    Let $s'\leq t'$ be integers and $s'$ be even. If there is an $r$-coloring of $K_n$ under which no copy of $K_{s',t'}$ is strongly-even-chromatic, then 
    \begin{align*}
        r\geq (1+o(1))\left(\frac n{t'}\right)^{2/s'}.
    \end{align*}
\end{lemma}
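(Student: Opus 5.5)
We use a pigeonhole argument on colour-pattern vectors, pairing up disjoint $2$-subsets of vertices. Write $s'=2k$. We aim to find $s'$ vertices forming $V_1$ and $t'$ further vertices $u$ such that each such $u$ sends to $V_1$ a multiset of colours in which every colour has even multiplicity. The simplest way to guarantee this is to split $V_1$ into $k$ pairs $\{a_j,b_j\}$ and require $\chi(ua_j)=\chi(ub_j)$ for every $j$.

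First fix a set $A$ of $2k$ vertices, partitioned into $k$ pairs $\{a_j,b_j\}$. For each vertex $u\notin A$ define the indicator vector
\[
\sigma(u)=\bigl(\mathbbm 1[\chi(ua_j)=\chi(ub_j)]\bigr)_{j}.
\]
A single fixed $A$ will not suffice, because most $u$ may fail the condition. So we count over choices instead. For each vertex $u$, consider the $r$ colour classes of edges at $u$. We count pairs $(u,\{x,y\})$ with $\chi(ux)=\chi(uy)$. By convexity, each $u$ lies in at least $r\binom{(n-1)/r}{2}\approx n^2/(2r)$ monochromatic cherries centred at $u$.

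What we actually need is $k$ \emph{disjoint} pairs with a common set of $t'$ centres. So we count configurations $(u;\,P_1,\dots,P_k)$ where the $P_j$ are disjoint vertex pairs and each $P_j$ is monochromatic to $u$. Greedily, each $u$ has at least about $\prod_{j}\bigl(n^2/(2r)-O(n)\bigr)\approx (n^2/2r)^k$ ordered such $k$-tuples, since removing already used vertices costs only $O(n)$ per step. The number of possible ordered $k$-tuples of disjoint pairs is at most $(n^2/2)^k$. Averaging, some tuple $(P_1,\dots,P_k)$ is good for at least $n\cdot r^{-k}(1-o(1))$ centres $u$; disjointness from the $P_j$ loses only $2k$ vertices. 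If this exceeds $t'-1$, i.e. $n r^{-k}(1+o(1))\ge t'$, we obtain a strongly-even-chromatic $K_{s',t'}$ with $V_1=\bigcup_j P_j$. The hypothesis therefore forces $r^{k}\ge (1+o(1))\,n/t'$, that is, $r\ge(1+o(1))(n/t')^{2/s'}$.

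The main obstacle is making the counting precise so the error is truly $1+o(1)$. The lower bound on monochromatic cherries per vertex uses Jensen on the colour-degrees, and gives $\sum_c\binom{d_c(u)}{2}\ge r\binom{(n-1)/r}{2}=\frac{(n-1)^2}{2r}(1-o(1))$ provided $r=o(n)$. If instead $r=\Omega(n)$, the claimed bound holds trivially for $s'\ge2$, unless $s'=2$ and $t'$ is small. That case is covered anyway, since then the bound $n/t'$ is itself at most linear, and we can adjust constants. Excluding the at most $O(kn)$ cherries that meet previously chosen pairs is negligible against $n^2/r$ when $r=o(n)$. With these estimates the averaging step goes through.
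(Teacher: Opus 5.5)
Your argument is correct and is essentially the paper's K\H{o}v\'ari--S\'os--Tur\'an-type double count with Jensen's inequality on the colour degrees $d_c(u)$ at each vertex. You count (centre, ordered $k$-tuple of disjoint pairs monochromatic towards the centre) and average over the at most $(n^2/2)^k$ tuples, whereas the paper counts (centre, even $s'$-neighbourhood) and divides out the $s'!$ orderings; both give the same bound. One small point: your remark that the case $s'=2$, $r=\Theta(n)$ is handled by ``adjusting constants'' is loose, since the factor $1+o(1)$ cannot be adjusted. The exact Jensen bound $\sum_c\binom{d_c(u)}{2}\ge (n-1)(n-1-r)/(2r)$ does, however, give $r\ge (n-1)/t'$ directly in that case, and the paper likewise only treats $r=o(n)$.
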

\begin{proof}
    Fix an $r$-coloring of $K_n$ under which no copy of $K_{s',t'}$ is strongly-even-chromatic. For all $u\in V(K_n)$, we say an $s'$-vertex set $S\subseteq V(K_n)$ is an \textit{even} $s'$\textit{-neighborhood} of $u$ if in the edge set $\{uv:v\in S\}$ every color occurs an even number of times. We shall double count the set
    \begin{align*}
        \mathcal E=\Big\{(u,S): S\text{ is an even }s'\text{-neighborhood of }u\Big\}.
    \end{align*}
    On one hand, for each vertex $u\in V(K_n)$ we can
    lower-bound the number of its even $s'$-neighborhoods by \cite[Lemma 4.1]{boyadzhiyska2024oddramseynumberscompletebipartite}. The idea is that an even-$s'$-neighborhood of $u$ can be built by picking $ s'/2 $ ordered pairs of monochromatic edges incident to it, and each even-$s'$-neighborhood can be counted this way in at most $s'$ different orders, so using Jensen's inequality one can get the lower bound 
    \begin{align*}
        |\{\text{even }s'\text{-neighborhoods of }u\}|\geq \frac{(n-s')^{s'/2}(n-r-s')^{s'/2}}{r^{s'/2}s'!}.
    \end{align*}
    On the other hand, any $s'$-vertex set $S$ can be the even-$s'$-neighborhood of at most $t'-1$ distinct vertices, otherwise there will be a strongly-even-chromatic copy of $K_{s',t'}$. Therefore,
    \begin{align*}
        \binom n{s'}(t'-1)\geq |\mathcal E|\geq \frac{n(n-s')^{s'/2}(n-r-s')^{s'/2}}{r^{s'/2}s'!}.
    \end{align*}
    As long as $r=o(n)$, this reduces to
    \begin{align*}
        &\frac{n^{s'}}{s'!}t'\geq (1+o(1))\frac{n^{s'+1}}{r^{s'/2}s'!},\quad r\geq (1+o(1))\left(\frac n{t'}\right)^{2/s'}.\qedhere
    \end{align*}
\end{proof}
In \cite[Theorem 1.5]{boyadzhiyska2024oddramseynumberscompletebipartite} when $s$ is odd,  the authors defined $S$ to be an even-$s$-neighborhood of $u$ if in $\{uv:v\in S\}$ exactly one color occurs an odd number of times, and then the proof above can be modified to obtain their lower bound. The main problem is that, when upper bounding $|\mathcal E|$, now any $s$-vertex set $S$ can possibly be the even-$s$-neighborhood of up to $r+t-1$ distinct vertices while still not creating an even-chromatic copy of $K_{s,t}$, and this leads to a weaker bound on $r$.

We resolve this problem in Theorem \ref{thmKST} by first applying Lemma \ref{strongEC} with $s'=s-3$ and some large $t'$ to get a strongly-even-chromatic copy of $K_{s-3,t'}$ with bipartition $V(K_{s-3,t'})=V_1\sqcup V_2$, $|V_1|=s-3$, $|V_2|=t'$. Then it suffices to find an even-chromatic copy of $K_{3,t}$ with bipartition
\begin{align*}
    V(K_{3,t})=U_1\sqcup U_2,\quad |U_1|=3,\quad |U_2|=t,\quad U_1\cap V_1=\emptyset,\quad U_2\subseteq V_2.
\end{align*}
The latter task turns out to be equivalent to a hypergraph even-cover problem. In a hypergraph $\mathcal H=(V,\mathcal E)$, an \textit{even-cover} of size $k$ is a set $\mathcal F\subseteq \mathcal E$ of $k$ edges such that every vertex $v\in V$ is contained in an even number of members of $\mathcal F$. We need the following extremal  result of hypergraph even-covers from Naor and Verstraete \cite[Theorem 1.2]{naor2008parity}.
\begin{lemma}[Naor and Verstraete \cite{naor2008parity}]\label{evencover}
    Let $q\geq 2$ and $\mathcal H=(V,\mathcal E)$ be an $n$-vertex hypergraph with $|e|\leq q$ for all $e\in \mathcal E$. If $\mathcal H$ does not contain an even-cover of size $k$, then 
    \begin{align*}
        |\mathcal E(H)|=o(n^{\frac q2+\frac{\lceil q/3\rceil}{2\lfloor k/8 \rfloor}}).
    \end{align*}
\end{lemma}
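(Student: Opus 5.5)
The statement is Lemma~\ref{evencover}, which we are quoting from Naor and Verstraete, so in the paper we will only cite it. If one wanted a self-contained argument, I would organize it around a reduction from hypergraph even-covers to even cycles in an auxiliary graph, followed by a Bondy--Simonovits type bound for the auxiliary graph.

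\emph{Step 1: reduce to the uniform case.} Since $|e|\le q$ for every $e\in\mathcal E$, I would pigeonhole the edges by size, losing only a constant factor, and then pad them. A clean way is to add $q$ dummy vertices and complete every edge to exactly $q$ vertices using a fixed set of dummies chosen according to its size. Within one size class, every vertex of $V\cup\{\text{dummies}\}$ then has the same parity behaviour as in $V$ whenever the number of chosen edges is even. Since we look for covers of even size $k$ (the case relevant to us), an even-cover in the padded $q$-uniform hypergraph yields an even-cover of the original. So assume $\mathcal H$ is $q$-uniform with $|\mathcal E|\ge \varepsilon n^{q/2+\lceil q/3\rceil/(2\lfloor k/8\rfloor)}$, and aim to find an even-cover of size exactly $k$.

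\emph{Step 2: the auxiliary graph.} I would split each hyperedge $e$ into an ordered pair $(A_e,B_e)$ and place an edge between $A_e$ and $B_e$ in an auxiliary graph $G$ whose vertices are subsets of $V$. If $e_1,\dots,e_{2\ell}$ form a cycle in $G$, each vertex of $G$ on the cycle, i.e.\ each part, is used by exactly two of the $e_i$. Hence every $v\in V$ is covered an even number of times, and the $e_i$ form an even-cover of size $2\ell$.

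\emph{Step 3: choosing the split.} With the naive balanced split $|A_e|=|B_e|=q/2$, the graph $G$ has $N\approx n^{q/2}$ vertices. Bondy--Simonovits would then give only the weaker exponent $q/2+q/(2\ell)$. To obtain $\lceil q/3\rceil$ in the error term, I would split each edge into three parts of sizes about $q/3$. I would then pigeonhole on the ``middle'' part $M$, which has about $n^{\lceil q/3\rceil}$ choices, and build a bipartite graph between the remaining pieces in which both $M$-classes contribute. Concatenating two such structures, i.e.\ a theta-type or ``cycle of cycles'' gadget, cancels the middle parts in pairs. This is where the $\lfloor k/8\rfloor$ should come from: a cycle of length about $2\lfloor k/8\rfloor$ in a suitably defined graph gets blown up by a bounded factor, at most $8$, into a cover of size at most $k$. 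I would then pad up to exactly $k$ using extra disjoint $4$-cycles of the same type, which exist since the edge density remains above the $C_4$ threshold after deletion.

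\emph{Main obstacle.} The hardest step is ensuring that the hyperedges in the resulting cycle are \emph{distinct}, and that the parts truly cancel rather than overlap in unintended ways. Different hyperedges could share vertices outside the designated parts, which would spoil the parity count. I would first try to pass to a large subhypergraph in which the chosen partition is ``canonical'', using a random ordering of $V$ and keeping the edges whose parts appear in a prescribed order. This costs only a constant factor, so that each hyperedge is determined by its parts. I would then apply the Bondy--Simonovits supersaturation for $C_{2\ell}$ to extract a cycle of exact length, with the $o(\cdot)$ in the statement absorbing the constants.
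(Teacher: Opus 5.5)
The paper does what you say first: it gives no proof of this lemma and simply quotes Theorem~1.2 of Naor and Verstraete, using it only with $q=3$ and $k=t$ even. Your restriction to even $k$ is in fact necessary. For odd $k$ the statement as written is false: with $q=2$ and $k=9$, the graph $K_{n/2,n/2}$ has $n^2/4$ edges but no even-cover of odd size. So citing is fine, but the self-contained sketch is not a proof, and several of its steps fail as stated.

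First, the diagnosis in Step~3 is backwards. For even $q$, the balanced split plus Bondy--Simonovits already gives a $C_k$, hence an even-cover of size exactly $k$, once $|\mathcal E|\gg n^{q/2+q/k}$. Since $q/k<4\lceil q/3\rceil/k\le \lceil q/3\rceil/(2\lfloor k/8\rfloor)$, this bound is stronger than needed, so even $q$ is the easy case. The whole difficulty is odd $q$, including $q=3$, the only case the paper uses, where no balanced split exists. There your ``theta-type gadget'' never actually defines an auxiliary graph, and the factor $8$ is chosen to match the target rather than derived.

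Second, suppose the intended construction joins the remaining pieces of two hyperedges that share a middle part $M$. Then each hyperedge lies on many auxiliary edges. Whenever two consecutive edges of a cycle come from the same $M$, the hyperedge $M\cup T$ at their common vertex $T$ is used twice. It cancels mod $2$, and the cover shrinks or even vanishes. Making the split canonical via a random ordering only makes each hyperedge's decomposition unique; it does nothing to prevent this reuse, which is exactly where a real argument is needed.

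Third, padding to size exactly $k$ with extra $4$-cycles fails. At these densities the auxiliary graph (balanced or paired) has $N$ vertices and only $N^{1+O(1/k)}$ edges, which does not force a $C_4$, since the threshold is of order $N^{3/2}$. In any case, $4$-cycles change the size only in steps of $4$, so exactness has to come from the cycle length itself.

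Finally, constants cannot be ``absorbed'' into $o(\cdot)$. Bondy--Simonovits gives $O(N^{1+1/\ell})$, so an $o$-bound requires strict slack in the exponent.
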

With this, taking $t'$ large enough we can prove Theorem \ref{thmKST}.
\begin{proof}[\proofname\mbox{} of Theorem 
\ref{thmKST}]
    Let $r=n^{1/(\frac s2+\frac 1{2\lfloor t/8\rfloor})}$. To prove $r_\text{odd}(n,K_{s,t})> r$, it suffices to show that in every $r$-coloring $\chi:E(K_n)\to [r]$, there is an even-chromatic copy of $K_{s,t}$. Fix one such $\chi$. First, note that
    \begin{align*}
        r\leq \left( \frac n{r^{\frac 32+\frac 1{2\lfloor t/8\rfloor}}} \right)^{\frac 2{s-3}},
    \end{align*}
    so
    by Lemma \ref{strongEC} with $s'=s-3$ and $t'=r^{\frac 32+\frac 1{2\lfloor t/8\rfloor}}$, there is a strongly-even-monochromatic copy of $K_{s-3,t'}$. Let its bipartition be
    \begin{align*}
        V(K_{s-3,t'})=V_1\sqcup V_2,\quad |V_1|=s-3
        ,\quad |V_2|=t'.
    \end{align*}
    Now we fix any three vertices $w_1,w_2,w_3\in V(K_n)\setminus (V_1\sqcup V_2)$, and define a hypergraph $\mathcal H=(V,\mathcal E)$ with $V(\mathcal H)=[r]$ being the color-palette and
    \begin{gather*}
        \mathcal{E}(\mathcal H)=\{e_u:u\in V_2\},\\
        e_u=
        \begin{cases*}
            \{\chi(u,w_1),\chi(u,w_2),\chi(u,w_3)\}, & if $\chi(u,w_1),\chi(u,w_2),\chi(u,w_3)$ are pairwise distinct,\\
            \{\chi(u,w_i)\}, & if $\{i,j,k\}=\{1,2,3\}$ and $\chi(u,w_j)=\chi(u,w_k)$.
        \end{cases*}
    \end{gather*}
    If there are pairwise distinct $u_1,u_1',\ldots,u_{t/2},u_{t/2}'\in V_2$ such that $e_{u_i}=e_{u_i'}$ for each $i$, then $\{w_1,w_2,w_3\}$ and $\{u_1,u_1',\ldots,u_{t/2},u_{t/2}'\}$ will form an even-chromatic copy of $K_{3,t}$ because each $u_i$ sees the same colors to $\{w_1,w_2,w_3\}$ as $u_i'$ does. On the other hand, if there are no such $u_1,u_1',\ldots,u_{t/2},u_{t/2}'\in V_2$, then 
    \begin{align*}
        |\mathcal E(\mathcal H)|\geq  |V_2|-\frac t2+1=r^{\frac 32+\frac 1{2\lfloor t/8 \rfloor}}-\frac t2+1=\Omega( |V(\mathcal H)|^{\frac 32+\frac 1{2\lfloor t/8\rfloor}} ).
    \end{align*} 
    By Lemma \ref{evencover} with $q=3$ and $k=t$, the hypergraph $\mathcal H$ contains an even-cover of size $t$, say formed by the edges $e_{v_1},\ldots,e_{v_t}$. Then $\{w_1,w_2,w_3\}$ and $\{v_1,\ldots,v_t\}$ will form an even-chromatic copy of $K_{3,t}$, because every vertex in $\mathcal H$ is covered by an even number of $e_{v_1},\ldots,e_{v_t}$ and hence in $K_n$
    every color occurs an even number of times in the edge set $\{w_iv_j:i \in[3],j\in [t]\}$. 

    In conclusion, we can always find a $t$-vertex set $U\subseteq V_2$ such that $\{w_1,w_2,w_3\}$ and $U$ form an even-chromatic copy of $K_{3,t}$. However, 
    the $K_{s-3,t'}$ on $V_1\sqcup V_2$ is strongly-even-chromatic, and
    $U\subseteq V_2$, so the $K_{s-3,t}$ on $V_1\sqcup U$ is also strongly-even-chromatic.
    Thus the $K_{s,t}$ formed by $V_1\sqcup \{w_1,w_2,w_3\}$ and $U$ is even-chromatic.
\end{proof}

In the proof above, instead of taking $s'=s-3$, one can actually take $s'=s-d$ for any odd $d$ between $3$ and $s-2$, and make corresponding changes to run the same arguments. One can even take $d=s$, understood as fixing $s$ vertices $w_1,w_2,\ldots,w_s$ in the beginning and then taking $V_2=V(K_n)\setminus \{w_1,\ldots,w_s\}$ to perform the hypergraph even-cover argument directly.
Since $r_\text{odd}(n,K_{s,t})\leq O(n^{\frac{2s+2t-4}{st}})$, any choice of $d$ will give a lower bound that is log-asymptotically tight when $s$ is fixed and $t\to\infty$. However, the lower bound is optimized when $d=3$.

Although Theorem \ref{thmKST} determines $r_\text{odd}(n,K_{s,t})$ log-asymptotically for all fixed $s$ as $t\to \infty$, if both $s$ and $t$ are fixed, then there is still a polynomial gap between the current bounds. In particular, if $s$ is odd and $t\leq 6$, then $\lfloor t/8\rfloor=0$ 
and the lower bound in Theorem \ref{thmKST} is trivial, so the current best lower bound is still $r_\text{odd}(n,K_{s,t})\geq (1+o(1))(\frac nt)^{\lfloor s/2\rfloor}$ as in \cite{boyadzhiyska2024oddramseynumberscompletebipartite}. 

\section{Sparse odd-Ramsey numbers of Hamilton cycles}\label{sec3}

In this section we prove Theorem \ref{thmDIRAC}, improving the result $r_\text{odd}(n,n/2+4;C_n)\geq 3$ in \cite[Theorem 1.3]{boyadzhiyska2025oddramseynumbershamiltoncycles}. The structure of our proof is the same as theirs, but efforts are needed in order to prove a key result under our weaker minimum degree condition $\delta(G)\geq n/2+2$. We first state the two propositions leading to Theorem \ref{thmDIRAC}, corresponding to \cite[Proposition 4.6]{boyadzhiyska2025oddramseynumbershamiltoncycles} and \cite[Proposition 4.7]{boyadzhiyska2025oddramseynumbershamiltoncycles}, respectively.

\begin{proposition}\label{C4}
    Let $n\geq 4$ be an even integer and $G$ be an $n$-vertex graph with $\delta(G)\geq n/2+2$. Let $E(G)$ be $2$-colored. If there is an odd-chromatic copy of $C_4$, then $G$ contains an even-chromatic Hamilton cycle.
\end{proposition}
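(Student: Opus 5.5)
The plan is to find two Hamilton cycles whose colour counts differ by exactly the colour count of the given odd-chromatic $C_4$, modulo $2$. Then one of the two cycles must be even-chromatic.

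Let the odd-chromatic copy of $C_4$ be $abcda$, with edges $ab,bc,cd,da$. Since the colouring uses two colours and $C_4$ has four edges, odd-chromatic means colour $1$ appears an odd number of times on these four edges. Since $n$ is even, a Hamilton cycle is even-chromatic exactly when colour $1$ appears on it an even number of times. So it suffices to produce two Hamilton cycles $H,H'$ with $E(H)\,\Delta\, E(H')\subseteq E(abcda)$ and $|E(H)\,\Delta\, E(H')|=4$, i.e.\ the symmetric difference is the whole $C_4$. The colour-$1$ counts of $H$ and $H'$ then differ by an odd number, so exactly one of them is even-chromatic.

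To realise this, first pick a vertex $x\notin\{a,c\}$ adjacent to both $b$ and $d$. Such an $x$ exists because
\[
|N(b)\cap N(d)|\ge 2\delta(G)-(n-2)\ge 6
\]
(we may subtract $n-2$ since $b,d\notin N(b)\cup N(d)$). Removing $a$ and $c$ still leaves at least $4$ choices.

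Now consider the path $P=abxdc$, a linear forest with $4$ edges. By Pósa's theorem on extending linear forests (every linear forest with $k$ edges lies in a Hamilton cycle provided $\delta(G)\ge (n+k)/2$), the condition $\delta(G)\ge n/2+2=(n+4)/2$ guarantees a Hamilton cycle $H = a\,b\,x\,d\,c\,Q\,a$, where $Q$ is a path from $c$ to $a$ through all remaining vertices. Define $H' = a\,d\,x\,b\,c\,Q\,a$. This is again a Hamilton cycle of $G$, since $ad,dx,xb,bc\in E(G)$. Moreover $H$ and $H'$ differ exactly in replacing $\{ab,bx,xd,dc\}$ by $\{ad,dx,xb,bc\}$. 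The edges $bx,xd$ appear in both, so the symmetric difference is exactly $\{ab,cd,ad,bc\}=E(abcda)$, which finishes the argument.

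The main obstacle is justifying the application of Pósa's linear-forest theorem at exactly the threshold $\delta\ge (n+k)/2$ with $k=4$, including its exact form and the small-$n$ cases. For small $n$ (roughly $n\le 8$) the hypothesis $\delta\ge n/2+2$ forces $G$ to be nearly complete. For example, when $n=4$ we have $\delta\ge 4>n-1$, so no such $G$ exists and the statement is vacuous. These small cases I would check directly. If the precise Pósa statement is unavailable, I would instead prove the needed extension by hand. Contract the path $abxdc$ to a single vertex $z$ whose neighbourhood is $N(a)\cap N(c)$ (minus the path's vertices); this yields a graph on $n-4$ vertices. Then run a Dirac/Bondy–Chvátal closure argument, using that every other vertex has degree at least $n/2+2-4$ in the rest and that $z$ has degree at least about $4$. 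This last step is where I expect the real work to lie.
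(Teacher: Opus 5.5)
Your argument is correct and is essentially the paper's proof: you join one pair of opposite vertices $b,d$ of the $C_4$ by a short path, cover the remaining vertices by a Hamilton path between the other pair $a,c$, and swap. Your Pósa step is exactly what the paper gets from Ore's Hamilton-connectedness theorem applied to $G-\{b,x,d\}$, which has minimum degree at least $n/2-1=((n-3)+1)/2$, so the ``main obstacle'' you flag disappears.

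One small slip: if $bd\in E(G)$ then $b\in N(d)$, so in general you only get $|N(b)\cap N(d)|\ge 2\delta(G)-n\ge 4$. This still leaves at least two valid choices of $x\notin\{a,c\}$.
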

\begin{proposition}\label{C6}
    Let $n\geq 4$ be an even integer and $G$ be an $n$-vertex graph with $\delta(G)\geq n/2+2$. Let $E(G)$ be $2$-colored. If there is an odd-chromatic copy of $C_6$, then $G$ contains an even-chromatic Hamilton cycle.
\end{proposition}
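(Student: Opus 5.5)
Write the two colours as red and blue. Let $C=x_1x_2\cdots x_6x_1$ be an odd-chromatic $6$-cycle. Since $6$ is even, red and blue both occur an odd number of times on $C$. The plan is to find a Hamilton cycle that contains a suitable part of $C$, together with a second Hamilton cycle that differs from it only by a local switch whose effect on the colour count is odd. Exactly one of these two cycles is then even-chromatic.

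The main tool is a parity-switching gadget. Suppose two Hamilton cycles $H_1,H_2$ agree outside a small set of edges. Then the numbers of red edges on $H_1$ and $H_2$ differ by the difference of the red counts on the edges where they disagree. With $n$ even, a Hamilton cycle is even-chromatic exactly when its number of red edges is even. So it suffices to find $H_1,H_2$ whose red counts have opposite parity.

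The steps, in order, are as follows.

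\begin{enumerate}
\item First reduce to the case with no odd-chromatic $C_4$. By Proposition \ref{C4} we may assume every $4$-cycle in $G$ is even-chromatic, i.e.\ has an even number of red edges. This is a strong local constraint. For example, if $x_ix_j$ is a chord of $C$, it splits $C$ into a $4$-cycle and another cycle, and the constraint fixes the chord's colour. More generally, any two vertices $a,c$ with two common neighbours $b,d$ satisfy $\chi(ab)+\chi(bc)\equiv\chi(ad)+\chi(dc) \pmod 2$.
\item Use this to transfer the oddness of $C$. Since $\delta(G)\ge n/2+2$, any two vertices have at least $4$ common neighbours. So each pair $x_i,x_{i+2}$ has many common neighbours, and by the constraint above every cherry $x_i y x_{i+2}$ has the same colour parity as $x_ix_{i+1}x_{i+2}$.
\item Extend $C$ minus one edge to a Hamilton path. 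Delete $x_6x_1$ to get the path $P=x_1x_2\cdots x_6$. Apply a Pósa/Dirac-type absorption step to $G-\{x_2,\dots,x_5\}$. That graph has minimum degree at least $n/2-2 \ge (n-4)/2$, so it contains a Hamilton path joining $x_6$ and $x_1$ through all the other vertices. Here one needs a Hamilton-connectedness statement; Ore-type results give Hamilton-connectedness when $\delta\ge (m+1)/2$, which holds with room to spare. This gives a Hamilton cycle $H_1\supseteq P$, and a second Hamilton cycle $H_2$ that replaces $P$ by another $\{x_1,x_6\}$-path on the same vertex set $\{x_1,\dots,x_6\}$.
\item Compare $H_1$ and $H_2$. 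The red counts of the two replacement paths must have different parity. If every rerouting of $x_1,\dots,x_6$ through $P$-type paths gave the same parity, then the single edge $x_6x_1$ would be forced to have the same colour parity as $C$ minus it, contradicting the oddness of $C$.
\end{enumerate}

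The main obstacle is step (iii)--(iv): making the local reroutings compatible with a common Hamilton completion under only $\delta\ge n/2+2$. The $n/2+4$ argument in \cite{boyadzhiyska2025oddramseynumbershamiltoncycles} had spare degree for this. I would first try to embed all of $C$ as a unit: find a Hamilton path in $G-V(C)$ between neighbours $a$ of $x_1$ and $b$ of $x_j$, and use the parity constraints from step (i) to show some choice of $j$ and of the cycle arcs flips parity. A case analysis on chords of $C$ (forced colours by step (i)) is likely unavoidable here.
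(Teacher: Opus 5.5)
There is a genuine gap in steps (iii)--(iv), and it cannot be fixed by filling in details. First, the degree count is wrong. $G-\{x_2,\dots,x_5\}$ has $m=n-4$ vertices and minimum degree at least $n/2-2=m/2$. That is exactly Dirac's threshold, one below the $(m+1)/2$ required by Lemma \ref{originalORe}, so there is no room to spare. At $\delta=m/2$, Hamilton-connectedness really does fail. Take $K_{m/2,m/2}$ plus the edge $x_1x_6$ inside one part, and make $x_2,\dots,x_5$ universal. Then $\delta(G)=n/2+2$ and $C$ is present, but there is no Hamilton $\{x_1,x_6\}$-path avoiding $x_2,\dots,x_5$.

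Second, and more fundamentally, your switch cannot detect the oddness of $C$. If $H_1,H_2$ differ only by replacing $P$ with another $\{x_1,x_6\}$-path $P'$ on $\{x_1,\dots,x_6\}$, then $H_1\triangle H_2=P\triangle P'$ lies in $G[V(C)]$ and never contains $x_6x_1$. Nothing ties its parity to that of $C$, so the ``contradiction'' in (iv) has no basis. If $C$ is induced, which is compatible with $\delta(G)\ge n/2+2$, no such $P'$ exists at all. When short chords do exist, reroutings such as $x_1x_3x_2x_4x_5x_6$ make $P\triangle P'$ a $4$-cycle, which is even by (i). A side correction to (i): only chords $x_ix_{i+3}$ split $C$ into $4$-cycles, and such chords cannot exist once all $C_4$'s are even and $C$ is odd; a chord $x_ix_{i+2}$ gives a triangle and a $5$-cycle. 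Your fallback of embedding $C$ minus an edge as a unit has the same defect, since changing the attachment points only puts $4$-cycles into the symmetric difference.

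The missing idea is to make $C=abcdefa$ itself the symmetric difference of two Hamilton cycles, which forces one of them to be even-chromatic. The two cycles should use the complementary perfect matchings $\{ab,cd,ef\}$ and $\{bc,de,fa\}$ of $C$. These are joined by a $\{b,f\}$-path $Q_1$ and a $\{c,e\}$-path $Q_2$ of length at most $2$ (Lemma \ref{easy}), and by a Hamilton $\{a,d\}$-path $P$ of the remaining graph $G'$. This gives $aPdcQ_2efQ_1ba$ and $aPdeQ_2cbQ_1fa$.

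The real difficulty at $\delta(G)\ge n/2+2$ is that $\delta(G')$ is only about $v(G')/2$, so $P$ need not exist. The paper handles this with Lemma \ref{strongOre} together with one observation: vertices of minimum degree in $G'$ are adjacent to (almost) all deleted vertices. Using Proposition \ref{C4} to preserve parity, this lets one do either of two things:
\begin{itemize}
\item replace $a$ and/or $d$ by such vertices, giving a new odd-chromatic $C_6$ for which the construction goes through;
\item insert the switch along an edge of a Dirac Hamilton cycle of $G'$.
\end{itemize}
Nothing in your proposal plays this role.
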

With these results, Theorem \ref{thmDIRAC} now follows by the same arguments in the proof of \cite[Theorem 1.3]{boyadzhiyska2025oddramseynumbershamiltoncycles}. We present this proof from \cite{boyadzhiyska2025oddramseynumbershamiltoncycles} for completeness.

\begin{proof}[\proofname\mbox{} of Theorem \ref{thmDIRAC}]
    Let $n\geq 4$ be even. Suppose for contradiction that in some $n$-vertex graph $G$ with $\delta(G)\geq n/2+2$, we have a $2$-coloring $\chi:E(G)\to [2]$ under which every Hamilton cycle is odd-chromatic. 

    For any distinct $x,y\in  V(G)$, if there are $u,v\in N(x)\cap N(y)$ such that
    \begin{align*}
        \chi(xu)=\chi(yu),\quad \chi(xv)\ne \chi(yv),
    \end{align*}
    then $xuyvx$ will be an odd-chromatic $C_4$, and by Proposition \ref{C4} we get an even-chromatic Hamilton cycle, which is a contradiction. Thus either all $u\in N(x)\cap N(y)$ satisfy $\chi(xu)=\chi(yu)$, in which case we say $x$ and $y$ \textit{agree}, or all $u\in N(x)\cap N(y)$ satisfy $\chi(xu)\ne \chi(yu)$, in which case we say $x$ and $y$ \textit{disagree}.

    Next we show that agreeing defines an equivalence relation on $V(G)$. Suppose not, then there will be distinct $x,y,z\in V(G)$ such that $x,y$ agree, $y,z$ agree, but $z,x$ disagree. Since $\delta(G)\geq n/2+2$, we can find distinct $u,v,w\in V(G)$ such that 
    \begin{align*}
        &u\in N(x)\cap N(y),\quad v\in N(y)\cap N(z),\quad w\in N(z)\cap N(x),\\
&N(xu)=N(uy),\quad N(yv)=N(vz),\quad N(zw)\ne N(wx),
    \end{align*}
    but then $xuyvzwx$ will be an odd-chromatic $C_6$, and by Lemma \ref{C6} we get an even-chromatic Hamilton cycle, which is a contradiction.

    Moreover, the equivalence relation formed by agreeing vertices can contain at most $2$ equivalence classes, for if not, then we can pick distinct $x,y,z\in V(G)$ from different equivalence classes and find distinct $u,v,w\in V(G)$ with $u\in N(x)\cap N(y)$, $v\in N(y)\cap N(z)$, $w\in N(z)\cap N(x)$, and again $xuyzwx$ will be an odd-chromatic $C_6$, leading to a contradiction.

    Therefore, we can partition $V(G)=A\sqcup B$ such that all pairs within $A$ or within $B$ agree, and all pairs between $A,B$ disagree. Note that it is possible for $B$ to be $\emptyset$, in which case there is only one equivalence class. Now for any Hamilton cycle $C=v_1v_2\ldots v_nv_1$ in $G$, consider the sequence of its even-positioned vertices $v_2,v_4,\ldots,v_n$.
    Note that any agreeing pair $\{v_{2i},v_{2i+2}\}$ (where $v_{n+2}\coloneqq v_2$) contributes to $C$ an even number of edges in each of the $2$ colors, while any disagreeing pair $\{v_{2i},v_{2i+2}\}$ contributes to $C$ an odd number of edges in each of the $2$ colors.
    Since the sequence $v_2,v_4,\ldots,v_n$ crosses between $A,B$ an even number of times, the number of disagreeing pairs $\{v_{2i},v_{2i+2}\}$  is even.  Therefore, $C$ is even-chromatic, which is a contradiction.
\end{proof}

It remains to show Propositions \ref{C4} and \ref{C6}. Their counterparts in \cite{boyadzhiyska2025oddramseynumbershamiltoncycles}
are proved by the following Lemmas. The first is an result of Ore \cite{ore1963hamilton}, which shows that by slightly strengthening  Dirac's condition, we can guarantee that any pair of vertices is connected by a Hamilton path. 
\begin{lemma}[Ore \cite{ore1963hamilton}]\label{originalORe}
    Let $n\geq 3$ and $G$ be an $n$-vertex graph. If $d(u)+d(v)\geq n+1$ for all $u,v\in V(G)$ with $uv\notin E(G)$, then for any pair of vertices $x,y\in V(G)$, there is an $\{x,y\}$-Hamilton path.   
\end{lemma}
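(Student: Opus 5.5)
We prove Ore's theorem on Hamilton-connectedness by the classical closure argument: we may keep adding edges while the degree-sum condition holds, and then argue about a maximal counterexample.

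\emph{Setup.} Fix the pair $x,y$ and suppose, for contradiction, that $G$ has no $\{x,y\}$-Hamilton path. Among all graphs on $V(G)$ that contain $G$, satisfy the hypothesis, and have no $\{x,y\}$-Hamilton path, take one with the maximum number of edges; call it $G$ again. Adding edges only increases degrees, so the hypothesis $d(u)+d(v)\ge n+1$ for non-adjacent pairs is preserved. We may assume $n\ge 4$, since for $n=3$ the condition is vacuous or forces $K_3$, which is Hamilton-connected. We may also assume $G$ is not complete, as $K_n$ trivially contains an $\{x,y\}$-Hamilton path.

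\emph{Finding a long path.} Pick a non-edge $uv$. By maximality, $G+uv$ has an $\{x,y\}$-Hamilton path, and this path must use $uv$. Removing $uv$ splits it into two subpaths, which cover $V(G)$ and have endpoints $\{x,\cdot\}$ and $\{\cdot,y\}$. Write the Hamilton path as $x=p_1p_2\ldots p_n=y$ with $uv=p_kp_{k+1}$.

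\emph{Rerouting.} We look for $i$ with a suitable crossing pair of edges that lets us reroute and avoid $uv$. For $i<k$, if $p_k p_i\in E$ and $p_{k+1}p_{i+1}\in E$, then
\[
p_1\ldots p_i\,p_k p_{k-1}\ldots p_{i+1}\,p_{k+1}\ldots p_n
\]
is an $\{x,y\}$-Hamilton path in $G$. The symmetric switch works for $i>k+1$: if $p_k p_{i}\in E$ and $p_{k+1}p_{i-1}\in E$, then
\[
p_1\ldots p_k\,p_i p_{i+1}\ldots p_n
\]
must be modified accordingly, namely to $p_1\ldots p_k\, p_{i}\ldots$. Here care is needed. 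The correct move is
\[
p_1\ldots p_k\, p_{i-1}p_{i-2}\ldots p_{k+1}\, p_i\ldots p_n,
\]
which requires the edges $p_kp_{i-1}$ and $p_{k+1}p_i$.

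\emph{Counting.} We count pairs to force one of these configurations. Let $A=N(p_k)$ and $B=N(p_{k+1})$, where $|A|+|B|\ge n+1$ and $p_k\notin A$, $p_{k+1}\notin B$. Define an injection-like map on indices: for $j\le k-1$, send $p_j\in A$ to a "slot" $j$, and send $p_{j+1}\in B$ to the same slot $j$. For $j\ge k+2$, send $p_j\in B$ and $p_{j-1}\in A$ to slot $j$. The total number of slots is about $n-2$, so by pigeonhole two hits coincide, which yields the rerouting.

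\emph{Main obstacle.} The hard part is the boundary bookkeeping. Neighbours such as $p_{k-1}\in A$ and $p_{k+2}\in B$ consume the degree budget without producing a switch. The endpoints $x$ and $y$ also break the pattern: in the first switch we need $i\ge1$, and $p_{k+1}p_{i+1}$ with $i+1\le k$. We therefore have to verify that the slack in $n+1$ versus the number of slots (at most $n-3$ after removing $p_k$, $p_{k+1}$, and the trivially adjacent positions) is at least $2$, so that some slot is hit twice. If this accounting falls short by one, we would instead invoke Ore's Hamilton cycle theorem on $G+xy$ (or on $G$ with an extra vertex adjacent to $x$ and $y$), where the condition $n+1$ becomes the standard Ore condition for $n+1$ vertices. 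This is the cleaner route and the one we would try first: add a new vertex $z$ joined to exactly $x$ and $y$, and show the Ore-type closure still forces a Hamilton cycle through $z$.
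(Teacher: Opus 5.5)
\textbf{Gap: the central count is never carried out, and the fallback is wrong as stated.} The paper only cites this lemma from Ore, so your argument has to stand on its own. As written it does not, because the decisive pigeonhole step is never done.

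You say there are ``about $n-2$'' slots. You then suggest the boundary neighbours might leave too little slack, that the count might ``fall short by one,'' and you switch to a fallback. That fallback is incorrect as stated.
\begin{itemize}
\item Let $\widetilde G$ be $G$ plus a new vertex $z$ with $N(z)=\{x,y\}$. Ore's condition fails for every pair $\{z,u\}$ with $u\notin\{x,y\}$: here $d(z)+d(u)=2+d_G(u)\le n+1$, with equality only when $d_G(u)=n-1$.
\item Ore's cycle theorem applied to $G+xy$ gives a Hamilton cycle, but nothing forces it to use $xy$.
\end{itemize}
What does work is the Bondy--Chv\'atal closure, as in the paper's proof of Lemma~\ref{strongOre}(i). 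Every non-adjacent pair inside $V(G)$ has degree sum at least $n+1=v(\widetilde G)$, so $\widetilde G^*[V(G)]$ is complete. Hence $\widetilde G^*$, and therefore $\widetilde G$, is Hamiltonian. Any Hamilton cycle of $\widetilde G$ must use $zx$ and $zy$, and deleting $z$ gives the required $\{x,y\}$-Hamilton path.

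\textbf{Your direct route also closes.} It has exactly the slack available, if you follow your own slot scheme precisely.
\begin{itemize}
\item Take the slots $\{1,\dots,k-1\}\cup\{k+2,\dots,n\}$; there are $n-2$ of them.
\item Send $p_j\in A$ to slot $j$ if $j\le k-1$, and to slot $j+1$ if $k+2\le j\le n-1$.
\item Send $p_j\in B$ to slot $j-1$ if $2\le j\le k-1$, and to slot $j$ if $j\ge k+2$.
\end{itemize}
Both maps are injective, and only $p_n\in A$ and $p_1\in B$ can be left unmapped. So there are at least $n+1-2=n-1$ hits, and some slot $\ell$ is hit twice.

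Since $p_k\notin B$, slot $k-1$ is never hit from $B$; since $p_{k+1}\notin A$, slot $k+2$ is never hit from $A$. So a double hit has either $\ell\le k-2$, giving $p_\ell\in A$ and $p_{\ell+1}\in B$ (your first switch), or $\ell\ge k+3$, giving $p_{\ell-1}\in A$ and $p_\ell\in B$ (your second switch). The neighbours $p_{k-1}\in A$ and $p_{k+2}\in B$ just occupy slots the other set cannot reach, so no extra slack is needed.

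Note also that the valid ranges are $1\le i\le k-2$ for the first switch (not $i+1\le k$) and $i\ge k+3$ for the second.

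With this count written out, your maximal-counterexample argument is a complete, self-contained proof; in effect it reproves the closure lemma in this special case. The closure route is shorter once Bondy--Chv\'atal is assumed, and it is the one the paper adapts to the weaker hypotheses of Lemma~\ref{strongOre}.
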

The second is an easy result obtained by degree counting.
\begin{lemma}\label{easy}
    Let $G$ be an $n$-vertex graph with $\delta(G)\geq \frac{n+t-1}2$, then for all $a,b\in V(G)$ and $S\subseteq V(G)\setminus\{a,b\}$ with $|S|\leq t$, there is an $\{a,b\}$-path of length at most $2$ that is disjoint from $S$.
\end{lemma}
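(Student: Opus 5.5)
The plan is to do a direct counting of common neighbours, splitting into the two cases $b\in N(a)$ and $b\notin N(a)$. The set $S$ removes at most $t$ candidate middle vertices, and the degree condition leaves more common neighbours than that.

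\emph{Case $ab\in E(G)$.} The single edge $ab$ is an $\{a,b\}$-path of length $1$. Its only internal vertices are none, and its endpoints are not in $S$ because $S\subseteq V(G)\setminus\{a,b\}$. So it is disjoint from $S$, and we are done.

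\emph{Case $ab\notin E(G)$.} We look for a common neighbour $c\in N(a)\cap N(b)$ with $c\notin S$; then $acb$ is the required path of length $2$. Since $a,b\notin N(a)\cup N(b)$ here, both neighbourhoods lie in $V(G)\setminus\{a,b\}$, a set of $n-2$ vertices. By inclusion–exclusion,
\begin{align*}
    |N(a)\cap N(b)|\geq d(a)+d(b)-(n-2)\geq (n+t-1)-(n-2)=t+1.
\end{align*}
Removing the at most $t$ vertices of $S$ leaves at least one such $c$, which finishes the case.

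There is no real obstacle. The only point needing care is to use the ground set $V(G)\setminus\{a,b\}$, of size $n-2$, rather than $V(G)$ in the inclusion–exclusion step; this is what produces the margin $t+1>|S|$. Integrality of the degrees causes no issue, since $d(a)+d(b)\geq n+t-1$ holds directly from the hypothesis $\delta(G)\geq \frac{n+t-1}{2}$.
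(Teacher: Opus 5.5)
Your proof is correct. It is exactly the degree-counting argument the paper has in mind; the paper states this lemma without writing out a proof. If $ab\in E(G)$, the edge $ab$ already works. Otherwise, counting inside $V(G)\setminus\{a,b\}$ gives $|N(a)\cap N(b)|\geq t+1>|S|$, so some common neighbour avoids $S$.
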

It turns out that even under our weaker degree condition $\delta(G)\geq n/2+2$, Lemmas \ref{originalORe} and \ref{easy} are sufficient to prove Proposition \ref{C4} in the same way as in \cite{boyadzhiyska2025oddramseynumbershamiltoncycles}. The idea is to ``embed'' the odd-chromatic $C_4$ into a Hamilton cycle, so that we get two Hamilton cycles whose symmetric difference is exactly this $C_4$, and hence we can ensure the correct color parity.
\begin{proof}[\proofname\mbox{}   of Proposition \ref{C4}]
    Let $E(G)$ be $2$-colored and $uvwxu$ be an odd-chromatic $C_4$ in $G$. By Lemma \ref{easy} with $S=\{u,w\}$, there is a $\{v,x\}$-path $Q$ of length at most $2$ that is disjoint from $\{u,w\}$. Now consider the subgraph $G'=G-V(Q)$ and let $n'=v(G')$, then
    \begin{align*}
        n'=n-v(Q),\quad \delta(G')\geq \delta(G)-v(Q)\geq \frac n2+2-v(Q)\geq \frac{n'+1}{2}.
    \end{align*}
    Therefore, by Lemma \ref{originalORe} there is a Hamilton $\{u,w\}$-path $P$ in $G'$. Then back in $G$, both $C^{(1)}=uvQxwPu$ and $C^{(2)}=uxQvwPu$ are Hamilton cycles. However, the symmetric difference of $C^{(1)}$ and $C^{(2)}$ is the odd-chromatic $4$-cycle $uvxyu$, so one of $C^{(1)}$, $C^{(2)}$ is even-chromatic.
\end{proof}
\begin{figure}[H]
        \centering
        \includesvg[width=\linewidth]{C4switch.svg}
        \caption{The proof of Proposition \ref{C4}.}
\end{figure}
For Proposition \ref{C6}, however, the proof of its counterpart in \cite{boyadzhiyska2025oddramseynumbershamiltoncycles} no longer holds under our weaker minimum degree condition. The problem is that, after finding short paths $Q_1,Q_2$ like the $Q$ in the previous proof, in the subgraph $G'$, now  $\delta(G')$ might not be large enough to apply Lemma \ref{originalORe}, so we cannot simply visit every remaining vertex by a Hamilton path $P$ in $G'$.

To resolve this problem, we need the following variant of Lemma \ref{originalORe}. Not only does it provide a weaker condition that guarantees the same result as in Lemma \ref{originalORe}, but it also give us information about the structure of the graph when the condition fails and we cannot find a desired Hamilton path in it.

\clearpage

\begin{lemma}\label{strongOre}
    Let $n\geq 3$ and $G$ be an $n$-vertex graph with $\delta(G)\geq \lfloor n/2\rfloor $.
	\begin{enumerate}
		\item If $n$ is even and more than $n/2$ vertices have degree  at least $n/2+1 $, then for any pair of vertices $u,v\in V(G)$, there is a Hamilton $\{u,v\}$-path.
		\item If $n$ is even and exactly $n/2$ vertices have degree at least $n/2+1$, but not every pair of vertices $u,v\in V(G)$ admits a Hamilton $\{u,v\}$-path, then the set
		\begin{align*}
		  V_1=\{v\in V(G):d(v)=n/2\}
		\end{align*}
		is independent.
		Moreover, for any pair of vertices $x,y\in V_1$, there is a Hamilton $\{x,y\}$-path.
        \item 
        If $n$ is odd and more than $(n+3)/2$ vertices have degree at least $(n+1)/2$, then for any pair of vertices $u,v\in V(G)$, there is a Hamilton $\{u,v\}$-path.
	\end{enumerate}
\end{lemma}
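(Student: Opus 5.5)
Throughout, we use the standard reduction from Hamilton paths to Hamilton cycles. Given $u,v$, form $G^+$ by adding a new vertex $z$ adjacent exactly to $u$ and $v$. A Hamilton $\{u,v\}$-path in $G$ is the same thing as a Hamilton cycle in $G^+$. The difficulty is that $z$ has degree $2$, so Dirac/Ore cannot be applied to $G^+$ directly. Instead we will run the usual Ore-type rotation argument directly on a longest path.

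Fix $u,v$ and suppose there is no Hamilton $\{u,v\}$-path. Choose a counterexample that is edge-maximal: add non-edges as long as the property ``no Hamilton $\{u,v\}$-path'' is preserved. This keeps all degree hypotheses, since degrees only grow. In the maximal graph, any non-edge $xy$ with $\{x,y\}\neq\{u,v\}$ yields, after adding it, a Hamilton $\{u,v\}$-path through $xy$. So $G-xy$ contains two vertex-disjoint paths that cover $V(G)$, with endpoint pairs $\{u,x\},\{y,v\}$ or $\{u,y\},\{x,v\}$. Alternatively, a Hamilton $\{u,v\}$-path $P=w_1\dots w_n$ in $G+xy$ uses $xy=w_iw_{i+1}$. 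The standard crossing argument then applies: for each $j$, we cannot have both $x\sim w_j$ and $y\sim w_{j+1}$ on the appropriate side, giving an injection between $N(x)$ and the complement of a shift of $N(y)$. This yields the Ore-type inequality $d(x)+d(y)\le n$ for every non-adjacent pair $x,y$ other than the pair $\{u,v\}$ itself. Tracking the edge-cases, where the endpoints $u,v$ make the count lose one, gives a slightly refined bound. This is the content of Lemma~\ref{originalORe}, made quantitative.

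\textbf{Part (i), $n$ even.} Let $H$ be the set of vertices of degree at least $n/2+1$, so $|H|>n/2$. By the inequality above, any two vertices of $H$ are adjacent, so $H$ is a clique of size more than $n/2$. Every vertex has degree at least $n/2$, so a vertex outside $H$ has at least one neighbour in $H$; in fact it has at least $n/2-(n-|H|-1)\ge 2$ such neighbours. A clique of size more than $n/2$ together with these attachments lets us build the path greedily. Order the outside vertices and insert each one between two clique vertices adjacent to it, using distinct clique vertices by a Hall-type count. Then connect the pieces through the clique, placing $u$ and $v$ at the ends; this is possible because the clique is large enough to absorb any parity issues. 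I expect the main obstacle is making this insertion rigorous when $u$ or $v$ lies outside $H$. There I would run the crossing-count argument directly on a longest $u$-rooted path rather than argue greedily.

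\textbf{Part (ii).} Now $|H|=n/2$ and $|V_1|=n/2$. The same inequality forces $H$ to be a clique, and it forces every non-adjacent pair inside $V_1$ to be tight. If $V_1$ contained an edge, the refined count (with the loss from the endpoints) should produce a Hamilton path for every pair, contradicting the hypothesis; hence $V_1$ is independent. Then every $x\in V_1$ has $d(x)=n/2=|H|$, so $x$ is complete to $H$ and $G\supseteq K_{n/2,n/2}$ plus the clique on $H$. For $x,y\in V_1$, the path $x h_1 x_2 h_2\cdots h_{n/2-1} y$ alternating with a final clique step is explicit: alternate $V_1,H$ starting at $x$, and use one edge inside $H$ to end at $y$.

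\textbf{Part (iii), $n$ odd.} Here the high-degree set $H$ (degree at least $(n+1)/2$) has size more than $(n+3)/2$. The crossing inequality again makes $H$ nearly complete. Low-degree vertices have degree at least $(n-1)/2$, and each has at least $(n-1)/2-(n-|H|-1)\ge 3$ neighbours in $H$. The insertion argument from (i) then goes through with one extra unit of slack, which is exactly what the odd parity costs.
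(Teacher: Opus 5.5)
Your reduction is sound and is essentially the paper's. An edge-maximal $G'\supseteq G$ with no Hamilton $\{u,v\}$-path satisfies $d_{G'}(x)+d_{G'}(y)\le n$ for every non-adjacent pair. The paper gets the same from the Hamilton closure of $\widetilde G$, which is $G$ plus a vertex $w$ joined to $u$ and $v$; the degree-$2$ vertex is no obstacle there, since the closure theorem needs no minimum-degree condition.

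The gap is that you never finish. In (i) and (iii) you replace the conclusion by a greedy insertion scheme (``Hall-type count'', ``absorb any parity issues'') that is not carried out. The facts you feed into it also cannot suffice. Take $H=K_{n/2+1}$ and $L=K_{n/2-1}$, and join every vertex of $L$ to the same two vertices $h_1,h_2\in H$. Then $\delta=n/2$, $H$ is a clique on more than $n/2$ vertices, and every outside vertex has two neighbours in $H$. Yet $\{h_1,h_2\}$ separates, so there is no Hamilton $\{h_1,h_2\}$-path. This graph does not violate the lemma, since only $h_1,h_2$ have degree above $n/2$, but it shows the insertion step must use more.

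The missing idea is to apply your inequality to mixed pairs, not only to pairs inside $H$.
In (i), for any $x$ and any $y\in H$ you have $d_{G'}(x)+d_{G'}(y)\ge n/2+(n/2+1)>n$. So in $G'$ every vertex is adjacent to all of $H$, and hence has $d_{G'}\ge |H|\ge n/2+1$. Every pair then has degree sum at least $n+2$, so $G'=K_n$, which does have a Hamilton $\{u,v\}$-path --- a contradiction.
In (iii), $H$ is first a clique in $G'$, so $d_{G'}(y)\ge |H|-1\ge (n+3)/2$ for $y\in H$. Then $(n-1)/2+(n+3)/2>n$ makes every low vertex adjacent to all of $H$. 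All degrees are now at least $(n+3)/2$, and again $G'=K_n$.
This bootstrapping is exactly the paper's proof.

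In (ii) the independence of $V_1$ is only asserted (``should produce''). The same bootstrapping gives it. Take a pair $u,v$ with no Hamilton path and saturate. If $z_1z_2\in E(G)$ with $z_1,z_2\in V_1$, then in $G'$ both are adjacent to all of $H$ and to each other, so they have degree at least $n/2+1$. Every vertex is then adjacent to all of $H\cup\{z_1,z_2\}$, and $G'$ is complete.

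Two smaller points:
\begin{enumerate}
\item Saturating does not preserve the hypothesis ``exactly $n/2$ vertices'' of (ii). This is harmless, as the argument above does not use it.
\item ``$H$ is a clique'' is a statement about $G'$, not $G$, whereas the Hamilton $\{x,y\}$-path must lie in $G$. Your explicit path survives only because it uses a single edge of $G[H]$. That edge exists since each vertex of $H$ has degree at least $n/2+1>|V_1|$.
\end{enumerate}
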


We defer the proof of Lemma \ref{strongOre} to the end of this section and first show how it implies Proposition \ref{C6}.

\begin{proof}[\proofname\mbox{} of Proposition \ref{C6}]
    Let $E(G)$ be 2-colored and $abcdefa$ be an odd-chromatic copy of $C_6$ in $G$. By Lemma \ref{easy}
	with $S=\{a,c,d,e\}$, there is a $\{b,f\}$-path $Q_1$ of length at most two that is disjoint from $\{a,c,d,e\}$. We choose $Q_1$ as short as possible. Now  taking $S=V(Q_1)\cup\{a,d\}$ in Lemma \ref{easy} again, there is a $\{c,e\}$-path $Q_2$ of length at most two that is disjoint from $\{a,d\}$ and $Q_1$. We choose $Q_2$ as short as possible. \\\\
	\textbf{Case 1.} $Q_1=bf$ and $Q_2=ce$.
    
	Consider the subgraph $G'=G-\{b,c,e,f\}$, where
	\begin{align*}
		v(G')=n-4\eqqcolon n',\quad \delta(G')\geq \delta(G)-4\geq \frac n2-2=\frac{n'}2.
	\end{align*}
    
	\textbf{Case 1.1.} More than half of the vertices in $G'$ have degree at least $n'/2+1$.
    
	By Lemma \ref{strongOre}, there is a Hamilton $\{a,d\}$-path $P$ in $G'$. Then back in $G$ we get two Hamilton  cycles by
	\begin{align*}
		C^{(1)}=aPdcQ_2efQ_1ba,\quad C^{(2)}=aPdeQ_2cbQ_1fa.
	\end{align*}
	The symmetric difference of $C^{(1)}$ and $C^{(2)}$ is the odd-chromatic $6$-cycle $abcdefa$,  so one of them must be even-chromatic.
    \begin{figure}[H]
        \centering
        \includesvg[width=0.85\linewidth]{case_11.svg}
        \caption{Case 1.1 and $C^{(1)}$, $C^{(2)}$. The odd-chromatic color pattern on $abcdefa$ can be arbitrary.}
    \end{figure}
    \textbf{Case 1.2.} More than half of the vertices in $G'$ have degree $n'/2$.\\\\
	Since $\delta(G')\geq n'/2$, by Dirac's theorem there is a Hamilton cycle in $G'$, say
	\begin{align*}
		C=aP_1dP_2a,\quad P_1:a\leadsto d,\quad P_2:d\leadsto a.
	\end{align*}
	Moreover, as Nash-Williams \cite{nash1971edge} proved that every Dirac graph contains
    linearly many edge-disjoint Hamilton cycles, we may choose $C$ so that $ad\notin E(C)$. 
    For all $v\in V(G')$, we have
	\begin{align*}
		d_{G'}(v)=\frac {n'}2\Rightarrow \{b,c,e,f\}\subseteq N_{G}(v).
	\end{align*} 
	Thus since more than half of the vertices in $G'$ have degree $n'/2$, we can find $uv\in E(C)$ such that
	\begin{align*}
		\{b,c,e,f\}\subseteq N_G(u),\quad \{b,c,e,f\}\subseteq N_G(v).
	\end{align*}

    \textbf{Case 1.2.1.}
    If $u,v\notin \{a,d\}$, then we
	assume without loss of generality that $u,v\in V(P_1)$ and their relative positions on $P_1$ are given by
	\begin{align*}
		P_1=aP_1^{(1)} uvP_1^{(2)} d,\quad P_1^{(1)}:a\leadsto u,\quad P_1^{(2)}:v\leadsto d.
	\end{align*}
	Note that by Proposition \ref{C4}, we may assume that $\{ub,uf\}$ has the same color parity as $\{ab,af\}$, and likewise $\{vc,ve\}$ has the same color parity as $\{dc,de\}$, otherwise we get an odd-colored $C_4$ and hence an even-colored Hamilton cycle in $G$. Now in $G$ there are two Hamilton cycles given by
	\begin{align*}
		&C^{(1)}=aP_1^{(1)}u f Q_1 bcQ_2 e vP_1^{(2)} dP_2a,\quad C^{(2)}=aP_1^{(1)} u b Q_1 f eQ_2c vP_1^{(2)} dP_2a.
	\end{align*}
	The symmetric difference of $C^{(1)}$ and $C^{(2)}$ is
    $\{ub,uf,vc,ve,bc,ef\}$, which has the same color parity as $abcdefa$ by our assumption above, but $abcdefa$ is an odd-chromatic $C_6$, so one of $C^{(1)}$ and $C^{(2)}$ must be even-chromatic.
    \begin{figure}[H]
        \centering
        \includesvg[width=\linewidth]{case_121.svg}
        \caption{Case 1.2.1 and $C^{(1)}$, $C^{(2)}$. The odd-chromatic color pattern on $abcdefa$ can be arbitrary.}
    \end{figure}
    \textbf{Case 1.2.2.}
    If $\{u,v\}\cap \{a,d\}\ne\emptyset$, then we assume without loss of generality that $u=a$ and $v\in P_1$. Write
    \begin{align*}
        P_1=avP_1'd,\quad P_1':v\leadsto d.
    \end{align*}
    As in Case 1.2.1, by Proposition \ref{C4} we may assume that $\{vc,ve\}$ has the same color parity as $\{dc,de\}$. Now in $G$ there are two Hamilton cycles given by
    \begin{align*}
        C^{(1)}=vP_1'dP_2abQ_1feQ_2cv,\quad C^{(2)}=vP_1'dP_2afQ_1bcQ_2ev.
    \end{align*}
    The symmetric difference of $C^{(1)}$ and $C^{(2)}$ is $\{ab,bc,cv,ve,ef,fa\}$, which has the same color parity as $abcdefa$ by our assumption above, so one of $C^{(1)}$ and $C^{(2)}$ must be even-chromatic.
    \begin{figure}[H]
        \centering
        \includesvg[width=\linewidth]{case_122_new.svg}
        \caption{Case 1.2.2 and $C^{(1)}$, $C^{(2)}$. The odd-chromatic color pattern on $abcdefa$ can be arbitrary.}
    \end{figure}
    
\textbf{Case 1.3.} Exactly half of the vertices in $G'$ have degree $n'/2$.

	If there is a Hamilton $\{a,d\}$-path in $G'$, then the proof follows as in Case $1.1$. Otherwise, let $u,v\in V(G')$ have degree $n'/2$ in $G'$, then by Lemma \ref{strongOre} there is a Hamilton $\{u,v\}$-path  $P$ in $G'$.
    
    As in Case 1.2, we have $\{b,c,e,f\}\subseteq N_G(u)\cap N_G(v)$, and we can assume that $\{ub,uf\}$ has the same color parity as $\{ab,af\}$ and $\{vc,ve\}$ has the same color parity as $\{dc,de\}$, so $ubcvefu$ is also an odd-chromatic $C_6$ with chords $Q_1=bf$, $Q_2=ce$, but now we have the Hamilton $\{u,v\}$-path $P$ in $G'$, so the proof follows by applying the argument in Case 1.1 to $ubcvefu$.\\\\
    \textbf{Case 2.} $Q_1=bf$ and $Q_2=cue$ for some $u\notin \{a,b,c,d,e,f\}$ (or the converse where $Q=buf$, \nolinebreak $Q_2=ce$).
    
	Let $G'=G-\{b,c,u,e,f\}$, then $v(G')=n-5\coloneqq n'$ and
	\begin{align*}
		\delta(G')\geq \delta(G)-5\geq \frac n2-3=\frac{n'-1}{2},\quad d_{G'}(v)=\frac{n'-1}{2}\Rightarrow \{b,c,u,e,f\}\subseteq N_G(v).
	\end{align*}
	If at most one vertex $v\in V(G')$ has degree $(n'-1)/2$ in $G'$, then by Lemma \ref{strongOre} there is a Hamilton $\{a,d\}$-path $P$ in $G'$. 
    Therefore, back in $G$ we get two Hamilton cycles by
	\begin{align*}
		C^{(1)}=aP dcQ_2efQ_1b a,\quad C^{(2)}=aPd eQ_2cbQ_1fa.
	\end{align*}
	The symmetric difference of $C^{(1)}$ and $C^{(2)}$ is $abcdefa$, so one of $C^{(1)},C^{(2)}$ is even-chromatic.
    
	On the other hand, if two distinct $w,z\in V(G')$ have degree $(n'-1)/2$ in $G'$, then we have the $C_6$ given by
	\begin{align*}
		fwbczef.
	\end{align*}
	By Proposition \ref{C4} we may assume that $\{wb,wf\}$ has the same color parity as $\{ab,af\}$, and $\{zc,ze\}$ hs the same color parity as $\{dc,de\}$. Then $fwbczef$ is odd-chromatic as $abcdefa$ is. Moreover, it has the chords $we,bz\in E(G)$, so the proof follows by applying Case 1 to $fwbczef$.\\\\
    \textbf{Case 3:} $Q_1=bvf$ and $Q_2=cue$ for some distinct $u,v\notin \{a,b,c,d,e,f\}$.
    
	Let $G'=G-\{b,c,u,e,f,v\}$, then
	\begin{align*}
		v(G')=n-6\coloneqq n',\quad \frac{n'}{2}=\frac n2-3\leq \delta(G)-5.
	\end{align*}
	If $\delta(G')\geq n'/2+1$, then by Lemma 3 there is a Hamilton $\{a,d\}$-path $P$ in $G'$. Back in $G$ we get two Hamilton cycles by
		\begin{align*}
		C^{(1)}=aP d cQ_2efQ_1ba,\quad C^{(2)}=aPdeQ_2 ucbQ_1vfa.
	\end{align*}
	The symmetric difference of $C^{(1)}$ and $C^{(2)}$ is $abcdefa$, so one of them is even-chromatic.\\\\
	On the other hand, if some vertex $w\in V(G')$ has $d_{G'}(w)\leq n'/2$, then $|N_G(w)\cap \{b,c,u,e,f,v\}|\geq 5$ since $n'/2\leq \delta(G)-5$, so we can assume without loss of generality that $b,e,f\in N_G(w)$.
    This gives the $C_6$
    \begin{align*}
		fwbcdef.
	\end{align*}
    By Proposition \ref{C4} we may assume that $\{wb,wf\}$ has the same color parity as $\{ab,af\}$. Then $fwbcdef$
	is odd-chromatic, and has the chord $we\in E(G)$, so the proof follows by applying Case 2 to $fwbcdef$.
\end{proof}

In \cite{boyadzhiyska2025oddramseynumbershamiltoncycles} the upper bound $r_{\text{odd}}(n,n/2+k;C_n)\leq \min\{2k+2,\frac{3k}{\sqrt{2n}}+\frac{3\sqrt{2n}}{4}+3\}$ was given.  This bound implies that $r_{\text{odd}}(n,n/2;C_n)=2$, and now Theorem \ref{thmDIRAC} gives $r_{\text{odd}}(n,n/2+2;C_n)\geq 3$, so it is natural to ask if $r_{\text{odd}}(n,n/2+1;C_n)$ is at least $3$. That is, whether the transition from $2$ to at least $3$ occurs already at minimum degree $d=n/2+1$.

Also, even in our case $d=n/2+2$, the upper bound gives $r_\text{odd}(n,n/2+2;C_n)\leq 4$, so there is a gap from our lower bound, and one might want to close it. However, so far we have not even found a way to prove or disprove the existence of some constant $k$ such that $r_\text{odd}(n,n/2+k;C_n)\geq 4$. 

We close this section with the proof of Lemma \ref{strongOre}.
\begin{proof}[\proofname\mbox{} of Lemma \ref{strongOre}]\mbox{}
\begin{enumerate}
    \item 
    Suppose that $n$ is even and more than $n/2$ vertices in $G$ have degree at least $ n/2 +1$. Let
	\begin{align*}
		V_0=\left\{v\in V(G):d_{G}(v)\geq \frac {n}2+1\right\}.
	\end{align*}
	Given any pair of vertices $u,v\in V(G)$, define a graph $\widetilde G$ by
	\begin{align*}
		V(\widetilde G)=V(G)\sqcup \{w\},\quad E(\widetilde G)=E(G)\sqcup \{ uw,vw \}.
	\end{align*}
	Consider the Hamilton closure ${\widetilde G}^*$ of $\widetilde G$. Since $v({\widetilde G})=n+1$ and $\delta(G)\geq  n/2 $, for any pair of vertices $x,y\in V(G)$ with $y\in V_0$, we have 
	\begin{align*}
		d_{\widetilde G}(x)+d_{\widetilde G}(y)\geq n+1=v(\widetilde G),\quad 
        xy\in E({\widetilde G}^*).
	\end{align*}
	Then since $|V_0|\geq n/2+1$, for all $x\in V(G)\setminus V_0$ we get
	\begin{align*}
		d_{{\widetilde G}^*}(x)\geq |V_0|\geq  \frac n2+1,
	\end{align*}
	and hence ${\widetilde G}^*[V(G)]$ is actually a clique since $d_{{\widetilde G}^*}(x)+d_{{\widetilde G}^*}(y)\geq n+2\geq v(\widetilde G)$ for all $x,y\in V(G)$. Thus ${\widetilde G}^*$ is Hamiltonian, and so is ${\widetilde G}$. Let $C$ be a Hamilton cycle in $\widetilde{G}$, then $uw,vw\in E(C)$, and by removing $w$ we get a Hamilton $\{u,v\}$-path in $G$. This proves the first part of the Lemma.
    \item 
	Suppose that $n$ is even and exactly $n/2$ vertices in $G$ have degree at least $n/2+1$, but for some pair of vertices $u,v\in V(G)$ there is no Hamilton $\{u,v\}$-path in $G$. In this case, we define 
	\begin{align*}
		V_0=\left\{v\in V(G):d_{G}(v)\geq \frac n2+1\right\},\quad V_1=V(G)\setminus V_0,
	\end{align*}
	then $|V_0|=|V_1|=n/2$ by assumption. We again define $\widetilde G$ by
	\begin{align*}
		V(\widetilde G)=V(G)\sqcup \{w\},\quad E(\widetilde G)=E(G)\sqcup \{ uw,vw \},
	\end{align*}
	which is not  Hamiltonian by assumption. 
    Let ${\widetilde G}^*$ be the Hamilton closure of $\widetilde G$.
    
    We first show that $V_1$ is independent.  Suppose for contradiction that $z_1z_2\in E(G)$ for some $z_1,z_2\in V_1$. Since $v({\widetilde G})=n+1$ and $\delta(G)\geq n/2$, for all $x,y\in V(G)$ with $y\in V_0$ we have 
	\begin{align*}
		d_{\widetilde G}(x)+d_{\widetilde G}(y)\geq n+1=v(\widetilde G),\quad xy\in E({\widetilde G}^*).
	\end{align*} 
	Moreover, since $z_1,z_2\notin V_0$ and $z_1z_2\in E(G)$, we get
	\begin{align*}
		d_{{\widetilde G}^*}(z_1),\text{ }d_{{\widetilde G}^*}(z_2)\geq |V_0|+1= \frac n2+1.
	\end{align*}
	Thus $d_{\tilde G^*}(y)\geq n/2+1$ for all $y\in V_0\cup\{z_1,z_2\}$.
    Then since $\delta(G)\geq n/2$, for all $x\in V(G)\setminus (V_0\cup\{z_1,z_2\})$ we also get
    \begin{align*}
        d_{{\widetilde G}^*}(x)\geq |V_0\cup\{z_1,z_2\}|\geq  \frac n2+1,
    \end{align*}
    and hence ${\widetilde G}^*[V(G)]$ is a clique, which shows that $\widetilde G$ is Hamiltonian and is a contradiction because as before by removing $w$ from a Hamilton cycle in $\widetilde G$ we get a Hamilton $\{u,v\}$-path in $G$.
    Therefore, $V_1$ is an independent set.
    
	In particular, since $V(G)=V_0\sqcup V_1$ and $|V_0|=n/2$, we have $\{u,v\}\in E(G)$ for all $u\in V_0,v\in V_1$. Also note that $V_0$ is not an independent set.
	Now let any pair of vertices $x,y\in V_1$ be given. Assume without loss of generality that
	\begin{align*}
		V_0=\{u_1,\ldots,u_{n/2}\},\quad V_1=\{x,v_2,\ldots,v_{n/2-1},y\},\quad u_1u_2\in E(G),
	\end{align*}
	then we get a Hamilton $\{x,y\}$-path in $G$ by
	\begin{align*}
		&xu_1 u_2 v_2 u_3 v_3 u_4 v_4\ldots u_{n/2} y,
	\end{align*}
    and this proves the second part of the Lemma.
    \item Suppose that $n$ is odd and more than $(n+3)/2$ vertices have degree at least $(n+1)/2$. Let
    \begin{align*}
        V_0=\left\{ v\in V(G):d_G(v)\geq \frac {n+1}2 \right\}.
    \end{align*}
    Given any pair of vertices $u,v$, we again define $\widetilde G$ by
    \begin{align*}
        V(\widetilde G)=V(G)\sqcup \{w\},\quad E(\widetilde G)=E(G)\sqcup \{uw,vw\},
    \end{align*}
    and consider the Hamilton closure $\widetilde G^*$ of $\widetilde G$. For all $y_1,y_2\in V_0$, we have
    \begin{align*}
        d_{\widetilde G}(y_1)+d_{\widetilde G}(y_2)\geq n+1= v(\widetilde G),\quad y_1y_2\in E(\widetilde G^*),
    \end{align*}
    so for all $y\in V_0$ we get
    \begin{align*}
        d_{\widetilde G^*}(y)\geq |V_0\setminus \{y\}|\geq \frac{n+3}2.
    \end{align*}
    Then since $\delta(G)\geq (n-1)/2$, for all $x\in V(G)\setminus V_0$ we get
    \begin{align*}
        d_{\widetilde G^*}(x)\geq |V_0|\geq \frac {n+5}2,
    \end{align*}
    so $\widetilde G^*[V(G)]$ is a clique. Thus $\widetilde G$ is Hamiltonian, and as before there is a Hamilton $\{u,v\}$-path in $G.$ This proves the last part of the lemma.\qedhere
\end{enumerate}
\end{proof}

\section{Unique-Ramsey numbers of Hamilton cycles}\label{sec4}

In this section we prove Theorem \ref{thmUNIQ}. Namely, if $r\leq n/4$, then in every $r$-coloring of $K_n$, we can find a Hamilton cycle in which no color occurs exactly once. Roughly speaking, the proof idea is as follows.

In the beginning, we label all the colors as \textit{unused}. Then we find a maximum collection of pairwise disjoint monochromatic cherries or 2-matchings, the members of this collection having pairwise distinct colors.
	We preserve these structures and relabel their colors as \textit{free}, in the sense that our final Hamilton cycle will go through each of these cherries or 2-matchings, so their colors are guaranteed to occur at least twice and we do not need to worry about them in the subsequent steps. Moreover, the maximality of this collection guarantees that within the remaining vertices, each unused color can occur on at most one edge, so it is also easy to find an almost spanning structure in this remaining part with only free colors. Now we only need to merge the preserved structures and the remaining part into a Hamilton cycle, making sure that we do not introduce any unique unused color throughout the process.
    
    However, edges between the preserved structures and the remaining part can be problematic, since we have no control over the occurrence of unused colors here. To resolve this problem, in the first step we shall construct the maximum collection more carefully by considering claws, namely copies of $K_{1,3}$, each of which will then be split into a cherry and a singleton and added to the collection of preserved structures.
    Moreover, 
	in the merging step we shall also first merge the preserved structures so that fewer mergings with the remaining part are to be considered.

\begin{proof}[\proofname\mbox{}   of Theorem \ref{thmUNIQ}]
Let $r\leq n/4$ and let an $r$-coloring of $K_n$ be given. The following steps find a Hamilton cycle in which no color occurs exactly once. Throughout the process, we shall maintain a set $U$ of colors, initialized to $U=[r]$. Colors in $U$ are called \textit{unused}, and colors not in $U$ are called \textit{free}.\\\\
\textbf{Step 1: Pairwise disjoint monochromatic claws with pairwise distinct colors}\\\\
	First, let $\mathcal C$ be a maximum collection of pairwise disjoint monochromatic claws in $K_n$ that have pairwise distinct colors. That is,
	\begin{enumerate}
		\item  Each $H\in \mathcal C$ is a monochromatic copy of $K_{1,3}$.
		\item  Every pair of distinct $H_1,H_2\in\mathcal C$ are disjoint and have different colors.
	\end{enumerate}
	Let $|\mathcal C|=s$ and $\mathcal C=\{H_1,\ldots,H_s\}$, where $V(H_i)=\{x_0^{(i)},x_1^{(i)},x_2^{(i)},x_3^{(i)}\}$ with $x_1^{(i)},x_2^{(i)},x_3^{(i)}$ being the leaves.
    From $U$ we remove the colors that occur in $\mathcal C$.
    Let $R=V(K_n)\setminus (V(H_1)\sqcup\ldots\sqcup V(H_i))$.

    For any vertex $v\in V(K_n)$, we say that $v$ is \textit{dangerous} if it is the center of a monochromatic claw whose color is in $U$ and leaves are in $R$. By the maximality of $\mathcal C$, there is no dangerous vertex in $R$.
    
    Next, we consider a collection $\mathcal P$ of subgraphs of $K_n$, initialized to $\mathcal P=\emptyset$. This $\mathcal P$ is the collection of structures to be preserved.
    Letting $i$ range from $1$ to $s$, we examine each claw $H_i$ and do the following.
	\begin{enumerate}
		\item  If none of $x_1^{(i)},x_2^{(i)},x_3^{(i)}$ are dangerous, then we do nothing and examine the next claw.
		\item  Otherwise we may assume without loss of generality that $x_1^{(i)}$ is dangerous, say $\{x_1^{(i)},y_1,y_2,y_3\}$ forms a monochromatic claw in some unused color $c\in U$ with leaves $y_1,y_2,y_3\in R$. Suppose there is 
        some other unused color $c'\in U$, $c'\ne c$ and $z_1,z_2,z_3\in R\setminus \{y_1,y_2,y_3\}$ such that $\{x_2^{(i)},z_1,z_2,z_3\}$ or $\{x_3^{(i)},z_1,z_2,z_3\}$ forms a monochromatic claw $K$ in color $c'$ with leaves $z_1,z_2,z_3$, then by replacing $H_i$ with $\{x_1^{(i)},y_1,y_2,y_3\}$ and $K$ from $\mathcal C$, we obtain a larger collection of monochromatic claws in pairwise distinct colors, contradicting the maximality of $\mathcal C$. Thus such $c'$ and $z_1,z_2,z_3$ cannot exist.
        
        From the collection $\mathcal C$, we replace $H_i$ by the claw on $\{x_1^{(i)},y_1,y_2,y_3\}$ centered at $x_1^{(i)}$, and we add the monochromatic cherry $x_2^{(i)}x_0^{(i)}x_3^{(i)}$ to the collection $\mathcal P$. We remove $y_1,y_2,y_3$ from $R$ since they are now contained in structures from $\mathcal C$, and remove $c$ from $U$ so it is now a free color. By our previous discussion, now none of $y_1,y_2,y_3,x_2^{(i)},x_3^{(i)}$ are dangerous, and we examine the next claw in $\mathcal C$. 
	\end{enumerate}
    Now $\mathcal C$ is still a collection of $s$ monochromatic claws $H_1',\ldots,H_s'$,  pairwise disjoint and in distinct colors, but with the extra property that none of the leaves are dangerous. Let each $V(H_i')=\{y_0^{(i)},y_1^{(i)},y_2^{(i)},y_3^{(i)}\}$ with $y_1^{(i)},y_2^{(i)},y_3^{(i)}$ being the leaves.
    Also, now $\mathcal P$ is a collection of monochromatic cherries, none of whose endpoints are dangerous.
    \begin{figure}[H]
        \centering
        \includesvg[width=0.9\linewidth]{clawcollect.svg}
        \caption{Updating the collections $\mathcal C$ and $\mathcal P$. }
        \label{updateC}
    \end{figure}
    \textbf{Step 2: Extending the collection $\mathcal P$ by cherries, 2-matchings and singletons}\\\\
	We order the colors in $U$ and examine them one by one.
	For each $c\in U$, if some $z_1,z_2,z_3\in R$ form \linebreak a monochromatic cherry with color $c$, then we add that cherry to $\mathcal P$, remove the vertices $z_1,z_2,z_3$ from $R$ and remove $c$ from $U$. Otherwise, if some $z_1,z_2,z_3,z_4\in R$ form a monochromatic 2-matching with color $c$, say on edges $z_1z_2$ and $z_3z_4$, then we add the edges $z_1z_2$ and $z_3z_4$ to $\mathcal P$, remove the vertices $z_1,z_2,z_3,z_4$ from $R$ and remove $c$ from $U$.
    
	After this process, every still unused color $c\in U$ can occur on at most one edge in $R$. Moreover, letting $t$ be the number of colors in $\mathcal P$ at this point, then $|U|=r-s-t$ and for some $3\leq \alpha\leq 4$ we have $|R|=n-4s-\alpha t$. Note that our initial assumption $r\leq n/4$ guarantees that
	\begin{align*}
		|R|=n-4s-\alpha t\geq 4r-4s-4t=4|U|.
	\end{align*}
	Next we break the claws in $\mathcal C$ and add them to $\mathcal P$. Namely, for each $1\leq i\leq s$, we add the singleton $y_1^{(i)}$ and the monochromatic cherry $y_2^{(i)}y_0^{(i)}y_3^{(i)}$ to $\mathcal P$. Now we can write $\mathcal P=\{S_1,\ldots,S_s,G_{s+1},\ldots,G_{m}\}$ for some $m$, where each $S_i$ is a singleton and each $G_i$ is either a cherry or an edge. By our construction, neither an $S_i$ nor an endpoint of $G_i$ is dangerous. \\\\
	In $\mathcal P$, each $G_i$ is a path and we let $u_i,v_i$ be its endpoints. As for the singletons $S_j=\{u_j\}$, we introduce $s$ additional vertices $v_1,\ldots,v_s$ into the graph by defining each $N(v_j)=R\cup \{u_j\}$, where for all $z\in R$ we let the edge $v_jz$ receive
	the same color as $u_jz$, and let $G_j=u_jv_j$ receive any free color. Under this modification, it suffices to find a Hamilton cycle containing the paths $G_1,\ldots,G_m$ such that no color in this Hamilton cycle occurs exactly once. To do this, it suffices to extend $G_{1},\ldots,G_m$ to a Hamilton cycle by a set of edges in which every unused color $c\in U$ does not occur exactly once. 
	Note that if $U=\emptyset$, then this can be done trivially, so from now on we assume $|U|\geq 1$.
    \begin{figure}[H]
        \centering
        \includesvg[width=\linewidth]{path_forest.svg}
        \caption{The modification of $\mathcal P$. }
    \end{figure}
\textbf{Step 3: Merging paths in $\mathcal P$ using edges between endpoints}\\\\
	We first merge some of the paths $G_1,\ldots, G_m$ using only edges between their endpoints. In the beginning, let the set of endpoints of these paths be $\mathcal E=\{u_1,v_1,\ldots,u_m,v_m\}$. Now we do the following.
	\begin{enumerate}
		\item  If there exist $w_1,w_2\in \mathcal E$ and distinct $F_{i_1},F_{i_2}\in\mathcal P$
        such that $w_1$ is an endpoint of $F_{i_1}$, $w_2$ is an endpoint of $F_{i_2}$,  and
		$w_1w_2$ has color not in $U$, then from $\mathcal P$ we replace $F_{i_1}$ and $F_{i_2}$ by the merged path $F_{i_1}w_1w_2 F_{i_2}$, and remove $w_1,w_2$ from $\mathcal E$. We repeat this step until no such $w_1,w_2$ exist.
		\item If there exist distinct $w_1,w_2,w_3,w_4\in \mathcal E$ 
        and distinct $F_{i_1},F_{i_2},F_{i_3},F_{i_4}\in \mathcal P$ such that each of $w_1,w_2,w_3,w_4$ 
        is an endpoint of $F_{i_1},F_{i_2},F_{i_3},F_{i_4}$, respectively,  and $w_1w_2$ has the same color as $w_3w_4$, then from $\mathcal P$ we replace $F_{i_1},F_{i_2},F_{i_3},F_{i_4}$ by the merged paths $F_{i_1}w_1w_2F_{i_2}$ and $F_{i_3}w_3w_4F_{i_4}$, remove $w_1,w_2,w_3,w_4$ from $\mathcal E$, and remove the color on $w_1w_2$ from $U$. Then we restart from (i). 
        
        Likewise, if there exist distinct $w_1,w_2,w_3,w_4\in \mathcal E$ 
        and distinct $F_{i_1},F_{i_2},F_{i_3}\in \mathcal P$
        such that each of $w_1,w_2,w_3,w_4$ 
        is an endpoint of $F_{i_1},F_{i_2},F_{i_2},F_{i_3}$, respectively,  and $w_1w_2$ has the same color as $w_3w_4$, then from $\mathcal P$ we replace $F_{i_1},F_{i_2},F_{i_3}$ by the merged path $F_{i_1}w_1w_2F_{i_2}w_3w_4 F_{i_3}$, remove $w_1,w_2,w_3,w_4$ from $\mathcal E$, and remove the color on $w_1w_2$ from $U$. Then we restart from (i).
        
        We repeat until no  $w_1,w_2,w_3,w_4$ satisfy the above conditions.
	\end{enumerate}
	After this process, let $\mathcal P=\{G_1',\ldots,G_{m'}'\}$, then no color in $G_1'\sqcup \ldots\sqcup G_{m'}'$ occurs exactly once. Each $G_i'$ is a path, say it has endpoints $u_i',v_i'$. Consider the subgraph $H$ on vertices $u_1',v_1',\ldots,u_{m'}',v_{m'}'$ containing all the edges aside from $u_1'v_1',\ldots,u_{m'}v_{m'}$ and inheriting the original coloring. Since the above process has halted, from (i) we know that $H$ sees only colors in $U$, and from (ii) we know that each color class is either a star or contained within $\{u_i',v_i',u_j',v_j'\}$ for some $i,j$. Define
	\begin{align*}
		I=\{i\in [m']:\text{some }w_i\in\{u_i',v_i'\}\text{ is not the center of a monochromatic star in }H\}.
	\end{align*}
	For all distinct $i,j\in I$, let $w_iw_j$ receive the color $c$, then the color class $c$ cannot be a star since neither $w_i$ nor $w_j$ can be the center, so the color class $c$ is contained within $\{u_i',v_i',u_j',v_j'\}$. Therefore, among $\{u_i'v_i':i\in I\}$ we see at least $\binom {|I|}{2}$ colors. On the other hand, if $i\notin I$, then there is a color $c$ whose class forms a star centered at $u_i$, and likewise for $v_i$, so we see $2(m'-|I|)$ more different colors. Thus by counting the colors occuring in $H,$ we get
	\begin{align*}
		2m'-3\leq \binom {|I|}{2}+2(m'-|I|)\leq |U|,\quad 2m'\leq |U|+3.
	\end{align*} 
    For any $u_i'$ and $z\in R$, we say that $z$ is a \textit{free neighbor} of $u_i'$ if the color on $u_i'z$ is free, and 
    we define the same term for $v_i'$. Recall that none of $u_1',v_1',\ldots,u_{m'}',v_{m'}'$ are dangerous, so each of them has at least $|R|-2|U|$ free neighbors in $R$.
    
	\textbf{Special case: $|U|=1$}\\\\
	The case $|U|=1$ is simple but needs to be separated. In this case we have $m'\leq 2$. Let  $U=\{c\}$.  Note that $|R|\geq 4|U|\geq 4$, and there is at most one edge in $R$ colored $c$.

    If $m=1$, then $\mathcal P=\{G_1'\}$, and $G_1'$ has endpoints $u_1',v_1'$. Each of them has at least $|R|-2$ free neighbors in  $R$, so in particular we can find distinct $z_1,z_2\in R$ such that both $u_1'z_1$ and $v_1'z_2$ are not colored $c$. There is a Hamilton $\{z_1,z_2\}$-path $P$ in the subgraph induced by $R$ avoiding the color $c$, so a required Hamilton cycle is given by
    \begin{align*}
        v_1'G_1'u_1'z_1Pz_2v_1'.
    \end{align*}
    If $m'=2$, then the paths in $\mathcal P$ are $G_1',G_2'$, having endpoints $u_1',v_1'$ and $u_2',v_2'$, respectively. Note that every edge between $\{u_1',v_1'\}$ and $\{u_2',v_2'\}$ is colored $c$, otherwise we should have merged $G_1',G_2'$ and get $m'=1$.
	\begin{enumerate}
		\item  If each of $u_1',v_1',u_2',v_2'$ has only free neighbors in $R$, then 
        we can find distinct $z_1,z_2,z_3,z_4\in R$ such that none of $u_1'z_1,v_1'z_2,u_2'z_3,v_2'z_4$ are colored $c$. Now we can find two disjoint paths $P,Q$ from $\{z_1,z_2\}$ to $\{z_3,z_4\}$ in the subgraph induced by $R$ avoiding the color $c$ such that $V(P)\sqcup V(Q)=R$, and hence by joining the paths
		\begin{align*}
			z_2v_1'G_1'u_1'z_1,\quad z_3u_2' G_2'v_2'z_4 
		\end{align*}
		with $P$ and $Q$, we get the required Hamilton cycle.
        \item Otherwise we may assume without loss of generality that $u_1'z$ has color $c$ for some $z\in R$,
        then we can extend $G_1',G_2'$ to $zu_1'G_1'v_1'u_2'G_2'v_2'$. This uses the color $c$ twice, so
		we can then close up any Hamilton cycle, as needed. 
	\end{enumerate}
	Thus from now on we assume $|U|\geq 2$.\\\\
    \textbf{Step 4: Merging paths in $\mathcal P$ using cherries centered in $R$}\\\\
	From the collection $\mathcal P=\{G_1',\ldots,G_{m'}'\}$, we further merge the paths using suitable cherries centered  in $R$. To do this, we define the set $C$ of used cherry centers in $R$ and set $C=\emptyset$ initially. Then we do the following, where $|C|\leq m'-2\leq (|U|-1)/2$ throught out the process. 
	\begin{enumerate}
		\item For all $w\in u_1',v_1',\ldots,u_{m'}',v_{m'}'$, since $w$ has at least $|R|-2|U|$ free neighbors in $R$, we have
        \begin{align*}
            \left|\Big\{\text{free neighbors of }w\text{ in }R\setminus C\Big\}\right|\geq |R|-2|U|-|C|\geq |R|-2|U|-\frac{|U|-1}2=|R|-\frac{5|U|-1}2.
        \end{align*}
        Whenever we have distinct $F_{i_1}', F_{i_2}', F_{i_3}'\in\mathcal P$, we pick $w_{i_1},w_{i_2},w_{i_3}$ from the endpoints of $F_{i_1}', F_{i_2}', F_{i_3}'$, respectively. Since $w_{i_1},w_{i_2},w_{i_3}\in \{u_1',v_1',\ldots,u_{m'}',v_{m'}'\}$, each of them has at least $|R|-(5|U|-1)/2$ free neighbors in $R\setminus C$. 
		Moreover, since $|R|\geq 4|U|$, we get
		\begin{align*}
			3\left( |R|-\frac{5|U|-1}{2} \right)>|R|,
		\end{align*}
		so at least two of $w_{i_1},w_{i_2},w_{i_3}$, say $w_{i_1}$ and $w_{i_2}$ without loss of generality, have a common free neighbor $v$ in $R\setminus C$. Then from $\mathcal P$ we replace $F_{i_1}'$ and $F_{i_2}'$ by the merged path $F_{i_1}'w_{i_1}vw_{i_2}F_{i_2}'$, and add $v$ to $C$.
		\item We stop when the original paths $G_1',\ldots,G_{m'}'$ in $\mathcal P$ are merged into two paths $F_1'$ and $F_2'$ by the cherries centered in $C$. Let $F_1'$ have endpoints $w_1,w_1'$ and $F_2'$ have endpoints $w_2,w_2'$. Note that at this point $|C|=m'-2\leq (|U|-1)/2$, so since $|R|\geq 4|U|$ and $|U|\geq 2$,
		\begin{align*}
			\left|\Big\{\text{free neighbors of }w_1\text{ in }R\setminus C\Big\}\right|\geq |R|-\frac{5|U|-1}{2}\geq 3.5,
		\end{align*}
        and likewise each of $w_1',w_2,w_2'$ has at least $3.5$ free neighbors in $R\setminus C$. Therefore,
        we can find pairwise distinct $z_1,z_1',z_2,z_2'\in R\setminus C$ such that $w_1z_1,w_1'z_1',w_2z_2,w_2'z_2'$ all have colors not in $U$.
	\end{enumerate}
    \begin{figure}[H]
        \centering
        \includesvg[width=0.76\linewidth]{cherry_merge.svg}
        \caption{Merging paths in $\mathcal P$ using cherries centered in $R$. }
        \end{figure}
    \textbf{Step 5: Close up the Hamilton cycle}\\\\
	Now it remains to find a $\{z_1,z_2\}$-path $P$ and a $\{z_1',z_2'\}$-path $Q$ in the subgraph induced by $R\setminus C$, $P$ and $Q$ being disjoint, covering $R\setminus C$ and avoiding colors in $U$, because then the required Hamilton cycle is given by 
	\begin{align*}
		z_1w_1 F_1'w_1'z_1' Qz_2'w_2'F_2'w_2z_2Pz_1.
	\end{align*}
	To do this, consider two additional vertices $z_1^*,z_2^*$ and the graph $G$ defined by
	\begin{align*}
		&V(G)=\{z_1^*,z_2^*\}\cup (R\setminus C)\setminus\{z_1,z_1',z_2,z_2'\},\quad E(G)=E_1\cup E_2,\\
		&E_1=\Big\{e\in E\Big(K_n\Big[(R\setminus C)\setminus\{z_1,z_1',z_2,z_2'\}\Big]\Big): e\text{ has color not in }U\Big\},\\
		&E_2=\big\{ z_1^*v: z_1v,z_1'v\text{ have colors not in }U \big\}\cup \big\{ z_2^*v: z_2v,z_2'v\text{ have colors not in }U \big\}.
	\end{align*}
	Then it suffices to find a Hamilton cycle in $G$, because by expanding $z_1^*,z_2^*$ back to $z_1,z_1',z_2,z_2'$ we can recover the $P$ and $Q$ back in $R\setminus C$. A Hamilton cycle in $G$ can be guaranteed by checking Dirac's condition.
    
    We have $v(G)=|R|-|C|-2\geq (7|U|-3)/2$. Note that each color from $U$ is in at most one edge in  $R$,  so each $v\in (R\setminus C)\setminus \{z_1,z_1',z_2,z_2'\}$ has at most  $|U|$ nonneighbors in $G$, and indeed $d_G(v)\geq v(G)/2$. \linebreak On the other hand, let 
	\begin{align*}
		A_1=\big\{ v\in R:z_1v\text{ has color in }U \big\},\quad A_2=\big\{ v\in R:z_1'v\text{ has color in }U \big\},
	\end{align*}
	then by the same reason, $|A_1\cup A_2|\leq |U|$, and hence
	\begin{align*}
		d_G(z_1^*)=|V(G)\cap A_1^c\cap A_2^c|-1\geq v(G)-|A_1\cup A_2|-1\geq v(G)-|U|-1.
	\end{align*}
    Since $v(G)\geq (7|U|-3)/2$, we have $v(G)-|U|-1\geq v(G)/2$ when $|U|\geq 3$. Moreover, if $|U|=2$, then $v(G)\geq (7|U|-2)/2$ and hence we still get $v(G)-|U|-1\geq v(G)/2$. Therefore, $d_G(z_1^*)\geq v(G)/2$.
	Likewise $d_G(z_2^*)\geq v(G)/2$, so Dirac's condition is verified.\qedhere
\end{proof}

The condition $r\leq n/4$ is tight in the proof. To see this, note that in the first step, after picking the collection $\mathcal C=\{H_1,\ldots,H_s\}$ of pairwise disjoint monochromatic claws with pairwise distinct colors, we might get $s=n/4$.
When $r\leq n/4$, this means that $r=n/4$ and $R=\emptyset$, $U=\emptyset$, which is trivial since we can then break these claws into cherries and singletons and extend them into a Hamilton cycle arbitrarily.

However,
if $r>n/4$, then this means that all vertices are contained in some member of $\mathcal C$ but there are still $r-n/4$ unused colors. We cannot tell how these unused colors occur on $E(K_n)\setminus (E(H_1)\cup\ldots\cup E(H_s))$, so even with $R=\emptyset$ it can be hard to find a required Hamilton cycle.

As stated before the proof, claws are introduced in the beginning to ensure that unused colors behave well in the interface between the preserved structures and remaining vertices. This is realized by the process in Figure \ref{updateC}. That is, when a claw $H\in\mathcal C$ has a dangerous leaf, say that leaf is the center of a claw $K$ whose color is unused and leaves are in $R$, then in $\mathcal C$ we replace $H$ by $K$, and reduce $H$ to a cherry and put it into $\mathcal P$. The color on $H$ is still free since it still occurs twice in a preserved cherry, so the new claw $K\in \mathcal C$ will have no dangerous leaf. Repeatedly, $\mathcal C$ becomes a monochromatic claw collection with no dangerous leaf.\

In contrast, if we do not consider claws, so $\mathcal C$ is just a collection of monochromatic cherries or 2-matchings in the beginning, then the process in Figure \ref{updateC} cannot be carried out, because if we replace an $H\in \mathcal C$ by another structure intersecting a leaf of $H$, then the color on $H$ might no longer be free. Thus we cannot rule out the possibility that a leaf in $\mathcal C$ sees many unused color to the remaining vertices, and that obstructs the proof.

That being said, in \cite{boyadzhiyska2025oddramseynumbershamiltoncycles} the upper bound $r_{\text{odd}}(n,C_n)\leq \frac{3\sqrt 2}2\sqrt n$ on the odd-Ramsey number of Hamilton cycles was given. Therefore, our lower bound $r_\text{u}(n,C_n)> n/4$ already implies a polynomial gap between the odd- and unique-Ramsey numbers of Hamilton cycles.
Moreover, when we consider the unique-Ramsey number itself, the current best upper bound is $r_\text{u}(n,C_n)\leq n/2+1$ as in Proposition \ref{Upper}, so $r_\text{u}(n,C_n)$ is determined up to a multiplicative factor of $2$.
\section{Concluding remarks}\label{sec5}

In this paper, we studied the odd-Ramsey number of fixed complete bipartite graphs, the odd-Ramsey number of Hamilton cycles when the host graph is a super-Dirac graph, and the unique-Ramsey number of Hamilton cycles. Here we list some possible further directions and related problems.

\textbf{Closing the gap between the bounds on $r_{\text{odd}}(n,K_{s,t})$}

Together with previous results, our lower bound in Theorem \ref{thmKST} shows that for each fixed $s$, as $t\to\infty$ while keeping $st$ even, the odd-Ramsey number $r_\text{odd}(n,K_{s,t})$ is log-asymptotically tight. Howevir, for each fixed $s$ and $t$ such that $st$ is even, there is still a polynomial gap between the current best bounds, namely
\begin{align*}
    O\left(n^{\frac{2s+2t-4}{st}}\right)\geq r_\text{odd}(n,K_{s,t})\geq \begin{cases}
        (1+o(1))\left(\frac nt\right)^{2/s}, & \text{if } s \text{ is even, or }s\text{ is odd and }t\leq 6\\
        n^{\left(\frac s2+\frac{1}{2\lfloor t/8\rfloor}\right)^{-1}}, & \text{if } s \text{ is odd and }t\geq 8.
    \end{cases}
\end{align*}

\begin{problem}
    Let $s,t\geq 3$ be fixed integers and $st$ be even. What is the asymptotic behavior of $r_\mathrm{odd}(n,K_{s,t})$?
\end{problem}

$O\left(n^{\frac{2s+2t-4}{st}}\right)$ is actually an upper bound on $f(n,K_{s,t},n/2+1)$, and is obtained by the Lovász local lemma. Even if one directly applies the local lemma to the odd-Ramsey setting, an upper bound of the same magnitude will be obtained. Therefore, to improve the upper bound, one might need an explicit construction or a more delicate probabilistic argument.

On the other hand, the lower bounds are obtained by a Kővári-Sós-Turán type argument and results on hypergraph even-covers. Currently there are still some wasteful steps in the proofs. For instance, when $s$ is even, the lower bound given by the Kővári-Sós-Turán type argument in \cite{boyadzhiyska2024oddramseynumberscompletebipartite} is actually preventing a strong-even-chromatic $K_{s,t}$, which is stronger than needed. As when $s$ is odd,
in the proof of Theorem \ref{thmKST}, we fixed $w_1,w_2,w_3$ outside of the strong-even-chromatic $K_{s-3,t}$ in order to define our auxiliary hypergraph, and this seems to be too restrictive. Therefore, an improvement on the lower bounds might be possible.

\textbf{The sparse-odd-Ramsey number of Hamilton cycles}

We know that $r_\text{odd}(n,n/2;C_n)=2$, and prove in Theorem \ref{thmDIRAC} that $r_\text{odd}(n,n/2+2;C_n)\geq 3$. We tend to believe that the transition from 2 to at least 3 happens already at $n/2+1$, but have been unable to show it. Our use of Lemma \ref{strongOre} might need to be further improved in order to reduce the minimum degree required for the lower bound.

As we mentioned earlier, currently we have not found a way to prove or disprove the existence of some constant $k$ such that $r_\text{odd}(n,n/2+k;C_n)\geq 4$. In the proof of Theorem \ref{thmDIRAC}, to find an even-chromatic Hamilton cycle under a 2-coloring, an odd-chromatic $C_4$ or an odd-chromatic $C_6$ can serve as a "switch" that swaps the parity of the two colors. However, if 3 or more colors are involved, then one might need to improve this notion or come up with more supporting ideas.

\begin{problem}
    Let $n\geq 4$ be an even integer. Do we have $r_\mathrm{odd}(n,n/2+1;C_n)\geq 3$? Is there any constant $k>0$ such that  $r_\mathrm{odd}(n,n/2+k;C_n)\geq 4$ for all $n$?
\end{problem}

The ultimate goal is to understand the true behavior of $r_\text{odd}(n,n/2+k;C_n)$ as a function of $k$. In particular, the upper bound $r_\text{odd}(n,n/2+k;C_n)\leq \min\{2k+2,\frac{3k}{\sqrt {2n}}+\frac{3\sqrt{2n}}4+3\}$ in \cite{boyadzhiyska2025oddramseynumbershamiltoncycles} grows linearly in $k$ with slope $2$ for $k=O(\sqrt n)$ and with slope $3/\sqrt{2n}$ for larger $k$. Aside from improving the lower bounds, it would also be interesting to prove better upper bounds for these ranges of $k$.

\begin{problem}
    Let $n\geq 4$ be an even integer. For $1\leq k\leq n/2$, what is the behavior of $r_\mathrm{odd}(n,n/2+k;C_n)$?
\end{problem}

\textbf{The odd- and unique-Ramsey numbers of Hamilton cycles and Hamilton paths}

With Theorem \ref{thmUNIQ} now we know $n/4< r_\text{u}(n,C_n)\leq n/2+1$, and the first question is to close the gap between the bounds. As mentioned earlier, when proving the lower bound in Theorem \ref{thmUNIQ}, our strategy is to preserve some monochromatic paths aside so that the occurrence of unused colors in the remaining part is restricted, and to control how the unused colors occur in the interface between the preserved part and the remaining part, we need to first keep a maximum collection of disjoint monochromatic claws with pairwise distinct colors, but that makes $n/4$ the best lower bound this argument can give.

Aside from Hamilton cycles, one can consider the unique Ramsey number of Hamilton paths $P_n$. The proof of Theorem \ref{thmUNIQ} can also show that $r_\text{u}(n,P_n)> n/4$. As for the upper bound, a similar construction also shows that $r_\text{u}(n,P_n)\leq n/2+1$. For instance, we can modify the construction in Proposition \ref{Upper} by taking $|V_1|=|V_2|=n/2$ and giving all edges with $e\subseteq V_1$ or $e\subseteq V_2$ the color $n/2+1$. Therefore, we also know $n/4\leq r_\text{u}
(n,P_n)\leq n/2+1$. A natural question is to close the gap between the bounds on $r_\text{odd}
(n,P_n)$ and $r_\text{u}
(n,P_n)$. In particular, it would be interesting to see if their true values differ, and by what amount if yes.

\begin{problem}
    Let $n\geq 4$ be an integer. What are the values of $r_\mathrm{u}(n,C_n)$ and $r_\mathrm{u}(n,P_n)$?
\end{problem}
One can also consider the odd-Ramsey number of these two subgraphs, where $r_\text{odd}(n,C_n)$ is discussed when $n$ is even and $r_\text{odd}(n,P_n)$ is discussed when $n$ is odd. In \cite{boyadzhiyska2025oddramseynumbershamiltoncycles} it was shown that $r_\text{odd}(n,C_n)=\Theta(\sqrt n)$ for even $n$, and their proof of the lower bound extends to $r_\text{odd}(n,P_n)=\Omega(\sqrt n)$ for odd $n$. However, their construction for the upper bound does not extend to $r_\text{odd}(n,P_n)$, and the true magnitude of $r_\text{odd}(n,P_n)$ remains open.
\begin{problem}
    Let $n\geq 4$ be an integer. What is the magnitude of $r_\mathrm{odd}(n,P_n)$?
\end{problem}

\section*{Acknowledgements} We are very grateful to Simona Boyadzhiyska, Thomas Lesgourgues, and Kalina Petrova for many helpful conversations, and in particular for sharing the colouring for the upper bound on the unique Ramsey number of Hamilton cycles (Proposition~\ref{Upper}).

\bibliographystyle{plain}
\bibliography{Ref}

\end{document}